\theoremstyle{plain}
\newtheorem{thm}[subsection]{Theorem}
\newtheorem{lem}[subsection]{Lemma}
\newtheorem{prop}[subsection]{Proposition}
\newtheorem{lemma}[subsection]{Lemma}
\newtheorem{theorem}[subsection]{Theorem}
\theoremstyle{definition}
\newtheorem{rem}[subsection]{Remark}
\newtheorem{defn}[subsection]{Definition}
\newtheorem{definition}[subsection]{Definition}
\newtheorem{remark}[subsection]{Remark}
\def\vs{\vskip}
\def\ni{\noindent}
\begin{document}
\title[Wahl's conjecture for a minuscule $G/P$]%
{Wahl's conjecture for a minuscule $G/P$}
\author[J. Brown]{J. Brown}
\address{Northeastern University, Boston, USA}
   \email{brown.justin1@neu.edu}
\author[V. Lakshmibai]{V. Lakshmibai${}^{\dag}$}
\address{Northeastern University, Boston, USA}
   \email{lakshmibai@neu.edu}

\thanks{${}^{\dag}$ V.~Lakshmibai was partially supported by NSF grant DMS-0652386 and
Northeastern University RSDF 07--08.}
 \maketitle

\begin{abstract}%
We show that Wahl's conjecture holds in all characteristics for a
minuscule $G/P$.
\end{abstract}
Let $X$ be a non-singular projective variety over $\mathbb{C}$.
For ample line bundles $L$ and $M$ over $X$, consider the natural
restriction map (called the {\em Gaussian\/})
\begin{equation*}\label{e.gaussian}
 H^0(X\times X, \mathcal{I}_\Delta\otimes p_1^*L\otimes p_2^*M)\to
H^0(X, \Omega^1_{X}\otimes L\otimes M)
\end{equation*}
where $\mathcal{I}_\Delta$ denotes the ideal sheaf of the diagonal
$\Delta$ in $X\times X$, $p_1$ and $p_2$ the two projections of
$X\times X$ on $X$, and $\Omega^1_{X}$ the sheaf of differential
$1$-forms of~$X$; note that this map is induced by the natural
projection
$\mathcal{I}_\Delta\rightarrow\rightarrow\mathcal{I}_\Delta/\mathcal{I}^2_\Delta$
by identifying the $\mathcal{O}_\Delta$-module
$\mathcal{I}_\Delta/\mathcal{I}^2_\Delta$ with $\Omega^1_{X}$.
Wahl conjectured in~\cite{wahl} that this map is surjective when
 $X=G/P$ for $G$ a complex semisimple algebraic group and  $P$ a
parabolic subgroup of $G$. Wahl's conjecture was proved by Kumar~
in \cite{kumar} using representation theoretic techniques.
In~\cite{lmp}, the authors considered Wahl's conjecture in
positive characteristics, and observed that Wahl's conjecture will
follow if there exists a Frobenius splitting of $X\times X$ which
compatibly splits the diagonal and which has the maximum possible
order of vanishing along the diagonal; this stronger statement was
formulated as a conjecture in \cite{lmp} (see \S\ref{s.blowups}
for a statement of this conjecture) which we shall refer to as the
\emph{LMP-conjecture} in the sequel. Subsequently, in \cite{mp},
Mehta-Parameswaran proved the LMP-conjecture for the Grassmannian.
Recently, Lakshmibai-Raghavan-Sankaran (cf.\cite{lrs}) extended
the result of \cite{mp} to symplectic and orthogonal
Grassmannians. In this paper, we show that the LMP conjecture (and
hence Wahl's conjecture)
 holds in all characteristics for a minuscule $G/P$ (of
course, if $G$ is the special orthogonal group $SO(m)$, then one
should not allow characteristic $2$). The main philosophy of the
proof is the same as in \cite{mp,lrs}; it consists in reducing the LMP
conjecture for a $G/P,P$ a parabolic subgroup to the problem of
finding a section $\varphi\in H^0(G/B,K^{-1}_{G/B})$ ($K_{G/B}$
being the canonical bundle on $G/B$) which has maximum possible
order of vanishing along $P/B$. This problem is further reduced to
computing the order of vanishing (along $P/B$) of the highest
weight vector $f_d$ in $H^0(G/B,L(\omega_d))$, for every
fundamental weight $\omega_d$ of $G$. For details, see \S
\ref{steps}.

 It should be remarked that though the spirit of this paper
is the same as that of \cite{mp,lrs}, the methods used (for computing
the order of vanishing of sections) in this paper differ from
those of \cite{mp,lrs}. Of course, the methods used in this paper
may also be used for proving the results of \cite{mp,lrs}. Thus
our methods provide an alternate proof of the results of
\cite{mp,lrs}; we have given the details in \S \ref{last}.

As a by-product of our methods, we obtain a nice combinatorial
realization of the order of vanishing (along $P/B$) of $f_d$ as
being the length of the shortest path through extremal weights in
the weight lattice connecting the highest weight (namely,
$i(\omega_d), i$ being the Weyl involution) in
$H^0(G/B,L(\omega_d))$ and the extremal weight $-\tau(\omega_d),
\tau$ being the element of largest length in $W_P$, the Weyl group
of $P$ (see Remark \ref{path}, Remark \ref{path2}).

This paper is organized as follows: In \S \ref{one}, we fix
notation. In \S \ref{s.splittings}, we recall some basic
definitions and results about Frobenius splittings, and also the
canonical section $\sigma\in H^0(G/B, K^{1-p})$. In
\S\ref{s.blowups}, we recall the results of~\cite{lmp} about
splittings for blow-ups and also the LMP conjecture. In \S
\ref{steps}, we describe the steps leading to the reduction of the
proof of the LMP conjecture to computing $ord_{P/B}\sigma$ (the
order of vanishing of $\sigma$ along $P/B$). In \S \ref{follow}, a
further reduction is carried out. In \S \ref{dn}, \S \ref{e6},\S
\ref{e7}, the details are carried out for \textbf{D}$_n$,
\textbf{E}$_6$, \textbf{E}$_7$ respectively. In \S \ref{last}, we
give the details for the remaining minuscule $G/P$'s.

\vs.2cm\ni\textbf{Acknowledgment:} Part of the work in this paper
was carried out when the second author was visiting University of
K\"oln during May-June, 2008; the second author expresses her
thanks to University of K\"oln for the hospitality extended to her
during her visit.

\section{Notation}\label{one} Let $k$ be the base field which we assume to be
algebraically closed of positive characteristic; note that if
Wahl's conjecture holds in infinitely many positive
characteristics, then it holds in characteristic zero also, for
the Gaussian is defined over the integers. Let $G$ be a simple
algebraic group over $k$ (if $G$ is the special orthogonal group,
then characteristic of $k$ will be assumed to be different from
$2$). Let $T$ be a maximal torus in $G$, and $R$ the root system
of $G$ relative to $T$. We fix a Borel subgroup $B,B\supset T$;
let $S$ be the set of simple roots in $R$ relative to $B$, and let
$R^+$ be the set of positive roots in $R$. We shall follow
\cite{bou} for indexing the simple roots. Let $W$ be the Weyl
group of $G$; then the $T$-fixed points in $G/B$ (for the action
given by left multiplication) are precisely the cosets
$e_w:=wB,w\in W$. For $w\in W$, we shall denote the associated
Schubert variety (the closure of the $B$-orbit through $e_w$) by
$X(w)$ .

\section{Frobenius Splittings}\label{s.splittings} Let $X$ be a
scheme over~$k$, separated and of finite type. Denote by $F$ the
{\em absolute Frobenius\/} map on $X$: this is the identity map on
the underlying topological space~$X$ and is the $p$-th power map
on the structure sheaf $\mathcal{O}_X$. We say that $X$ is {\em
Frobenius split\/}, if the $p$-th power map
$F^\#:\mathcal{O}_X\rightarrow F_*\mathcal{O}_X$ splits as a map
of $\mathcal{O}_X$-modules (see~\cite[\S1,~Definition~2]{mr},
\cite[Definition~1.1.3]{bk}). A splitting $\sigma:
F_*\mathcal{O}_X\to\mathcal{O}_X$ {\em compatibly splits\/} a
closed subscheme~$Y$ of $X$ if $\sigma(F_*\mathcal{I}_Y)\subseteq
\mathcal{I}_Y$ where $\mathcal{I}_Y$ is the ideal sheaf of~$Y$
(see \cite[\S1,~Definition~3]{mr}, \cite[Definition~1.1.3]{bk}).

Now let $X$ be a non-singular projective variety, and $K$ its
canonical bundle. Using Serre duality (and the observation that
$F^*L\cong L^p$ for an invertible sheaf~$L$ on $X$),  we get a
($k$-semilinear) isomorphism of
$H^0(X,\mathcal{H}om(F_*\mathcal{O}_X,\mathcal{O}_X))$

\ni $(=Hom_{\mathcal{O}_X} (F_*\mathcal{O}_X, \mathcal{O}_X))$
with $H^0(X,K^{1-p})$ (see~\cite[Page~32]{mr}, \cite[Lemma~1.2.6
and \S1.3]{bk}). Thus to find splittings of $X$, we are led to
look at a $\sigma$ in $H^0(X,K^{1-p})$ such that the associated
homomorphism $F_*\mathcal{O}_X\to \mathcal{O}_X$ is a splitting of
$F^\#$; in the sequel, following \cite{mr}, we shall refer to this
situation by saying \emph{the element $\sigma\in H^0(X,K^{1-p})$
splits $X$}.

\begin{remark}\label{max}
By local computations, it can be seen easily that if a $\sigma\in
H^0(X,K^{1-p})$ vanishes to order $>d(p-1)$ along a subvariety $Y$
of codimension $d$ for some $1\le d\le dim\,X -1$, then $\sigma$
is not a splitting of $X$. Hence we say that \emph{a subvariety
$Y$ is compatibly split by $\sigma$ with maximum multiplicity} if
$\sigma$ is a splitting of $X$ which compatibly splits $Y$ and
which vanishes to order $d(p-1)$ generically along $Y$.
\end{remark}

We will often use the following Lemma:
\begin{lemma}\label{often}
Let $f:X\rightarrow Y$ be a morphism of schemes such that $f^\#:
\mathcal{O}_Y\rightarrow f_*\mathcal{O}_X$ is an isomorphism.
\begin{enumerate}
\item If $X$ is Frobenius split, then so is $Y$. \item If $Z$ is
compatibly split, then so is the scheme-theoretic image of $Z$ in
$Y$.
\end{enumerate}
\end{lemma}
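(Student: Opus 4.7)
The plan is to produce a candidate splitting of $Y$ from one on $X$ by push-forward, and to use the assumption $f^\# : \mathcal{O}_Y \xrightarrow{\sim} f_*\mathcal{O}_X$ to identify the resulting map as a splitting of the Frobenius on $Y$. The key combinatorial fact I will exploit is that since the absolute Frobenius is the identity on topological spaces, one has $f_*F_* = F_*f_*$ as functors, and the naturality square
\[
\begin{CD}
\mathcal{O}_Y @>F^\#_Y>> F_*\mathcal{O}_Y \\
@VVf^\#V @VVF_*f^\#V \\
f_*\mathcal{O}_X @>f_*F^\#_X>> f_*F_*\mathcal{O}_X = F_*f_*\mathcal{O}_X
\end{CD}
\]
commutes. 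Thus $F_*f^\#$ identifies the bottom Frobenius with the top one.

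For part (1), given a splitting $\sigma : F_*\mathcal{O}_X \to \mathcal{O}_X$, I would take $f_*\sigma : F_*f_*\mathcal{O}_X \to f_*\mathcal{O}_X$ and transport it via $f^\#$ to obtain a map $\bar\sigma : F_*\mathcal{O}_Y \to \mathcal{O}_Y$. That $\bar\sigma \circ F^\#_Y = \mathrm{id}$ follows directly from chasing the above square together with the identity $\sigma \circ F^\#_X = \mathrm{id}$, using that $f^\#$ is an isomorphism.

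For part (2), let $Z \subseteq X$ be compatibly split by $\sigma$, and let $W \subseteq Y$ denote the scheme-theoretic image of $Z$, whose ideal sheaf $\mathcal{I}_W$ is by definition the kernel of $\mathcal{O}_Y \to f_*\mathcal{O}_Z$. Applying $f_*$ to $0 \to \mathcal{I}_Z \to \mathcal{O}_X \to \mathcal{O}_Z \to 0$ and using the isomorphism $f^\#$ identifies $\mathcal{I}_W$ with $f_*\mathcal{I}_Z$ as subsheaves of $f_*\mathcal{O}_X \cong \mathcal{O}_Y$. Then
\[
\bar\sigma(F_*\mathcal{I}_W) \;=\; f_*\sigma(F_* f_*\mathcal{I}_Z) \;=\; f_*\sigma(f_*F_*\mathcal{I}_Z) \;=\; f_*\bigl(\sigma(F_*\mathcal{I}_Z)\bigr) \;\subseteq\; f_*\mathcal{I}_Z \;=\; \mathcal{I}_W,
\]
so $W$ is compatibly split by $\bar\sigma$.

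This proof is mostly formal diagram-chasing, so I do not expect any genuine obstacle. The only point requiring mild care is the identification $\mathcal{I}_W = f_*\mathcal{I}_Z$ in part (2), which relies on the left-exactness of $f_*$ together with the hypothesis that $f^\#$ is an isomorphism (so that $f_*\mathcal{O}_X$ contributes no extra terms in the long exact sequence); everything else is immediate from naturality of $F$.
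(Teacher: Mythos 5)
Your proof is correct and is essentially the standard argument (push forward the splitting via $f_*$, transport through the isomorphism $f^\#$, and use $f_*F_* = F_*f_*$); the paper does not give its own proof but cites \cite{bk}, Lemma~1.1.8 and \cite{mr}, Proposition~4, and your argument is the same as the one found there.
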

For a proof, see \cite{bk},Lemma 1.1.8 or \cite{mr}, Proposition
4.
\subsection{The section $\sigma\in H^0(X,K^{-1}_X)$}\label{sect}
As above, let $X=G/B$. In \cite{mr}, a section $s'\in
H^0(X,K^{1-p}_X)$ giving a splitting for $X$ is obtained by
inducing it from a section $s\in H^0(Z,K^{1-p}_Z)$ which gives a
splitting for $Z$, the Bott-Samelson variety. It turns out  that
up to a non-zero scalar multiple, $s'$ equals $\sigma^{p-1}$ where
$\sigma\in H^0(X,K^{-1}_X)$. In fact, one has an explicit
description of $\sigma$: We have, $K^{-1}_X=L(2\rho)$ where $\rho$
denotes half the sum of positive roots (here, for an integral
weight $\lambda,L(\lambda)$ denotes the associated line bundle on
$X$). Let $f^+,f^-$ denote respectively a highest, lowest weight
vector in $H^0(X,L(\rho))$ (note that $f^+,f^-$ are unique up to
scalars). Then $\sigma$ is the image of $f^+\otimes f^-$ under the
 map \[H^0(X,L(\rho))\otimes H^0(X,L(\rho))\rightarrow
 H^0(X,L(\rho))\] given by multiplication of sections.

 See \cite[\S 2.3]{bk} for details.
 \section{Splittings and Blow-ups}\label{s.blowups}
Let $Z$ be a non-singular projective variety and $\sigma$ a
section of $K^{1-p}$ (where $K$ is the canonical bundle) that
splits~$Z$. Let~$Y$ be a closed non-singular subvariety of~$Z$ of
codimension~$c$.   Let $ord_Y\sigma$ denote the order of vanishing
of $\sigma$ along $Y$. Let $\pi:\tilde{Z}\rightarrow Z$ denote the
blow up of $Z$ along~$Y$ and~$E$ the exceptional divisor (the
fiber over~$Y$) in~$\tilde{Z}$.

A splitting $\tilde{\tau}$ of $\tilde{Z}$ induces a
splitting~$\tau$ on~$Z$, in view of Lemma \ref{often} (since,
$\pi_*\mathcal{O}_Z\rightarrow \mathcal{O}_{\tilde{Z}}$ is an
isomorphism). We say that~$\sigma$ {\em lifts\/} to a splitting of
${\tilde{Z}}$ if it is induced thus from a
splitting~$\tilde{\sigma}$ of~${\tilde{Z}}$ (note that the lift of
$\sigma$ to ${\tilde{Z}}$ is unique if it exists, since
${\tilde{Z}}\to Z$ is birational and two global sections of the
locally free sheaf
$\mathcal{H}om_{\mathcal{O}_{\tilde{Z}}}(F_*\mathcal{O}_{\tilde{Z}},
\mathcal{O}_{\tilde{Z}})$ that agree on an open set must be
equal).

\begin{prop}
With notation as above, we have
\begin{enumerate}
\item $ord_Y\sigma\leq c(p-1)$. \item If $ord_Y\sigma=c(p-1)$ then
$Y$ is compatibly split. \item $ord_Y\sigma \ge(c-1)(p-1)$ if and
only if $\sigma$ lifts to a splitting~$\tilde{\sigma}$
of~$\tilde{Z}$; moreover, $ord_Y\sigma = c(p-1)$ if and only if
the splitting $\tilde{\sigma}$ is compatible with $E$.
\end{enumerate}
\end{prop}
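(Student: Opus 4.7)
The plan is to reduce all three parts to the codimension-one situation on $\tilde{Z}$ via the blow-up formula for canonical bundles. Since $Y\subset Z$ is smooth of codimension $c$, we have $K_{\tilde{Z}}=\pi^*K_Z\otimes\mathcal{O}_{\tilde{Z}}((c-1)E)$, whence
$$\pi^*K_Z^{1-p}\;=\;K_{\tilde{Z}}^{1-p}\otimes\mathcal{O}_{\tilde{Z}}((p-1)(c-1)E).$$
Thus $\pi^*\sigma$ is canonically a rational section of $K_{\tilde{Z}}^{1-p}$ with at worst a pole of order $(p-1)(c-1)$ along $E$, and $ord_E\pi^*\sigma=ord_Y\sigma$ since $\pi$ is an isomorphism off $E$.

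I would treat (3) first. The pole along $E$ is killed precisely when $ord_E\pi^*\sigma\ge(p-1)(c-1)$, i.e., when $ord_Y\sigma\ge(c-1)(p-1)$; this is exactly the condition for $\pi^*\sigma$ to descend to a regular section $\tilde{\sigma}\in H^0(\tilde{Z},K_{\tilde{Z}}^{1-p})$, which is then unique by the remark preceding the proposition. To check that $\tilde{\sigma}$ is a splitting, note that the composition $\tilde{\sigma}\circ F^\#\colon\mathcal{O}_{\tilde{Z}}\to\mathcal{O}_{\tilde{Z}}$ is $\mathcal{O}_{\tilde{Z}}$-linear, so it corresponds to a global section of $\mathcal{O}_{\tilde{Z}}$; since it agrees with the identity on the dense open $\tilde{Z}\setminus E\cong Z\setminus Y$ (where $\tilde{\sigma}$ transports $\sigma$ via $\pi$) and $\mathcal{O}_{\tilde{Z}}$ is torsion free on the integral scheme $\tilde{Z}$, it equals the identity globally.

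Granted (3), the rest falls out. For (1) the case $ord_Y\sigma<(c-1)(p-1)$ is automatic, and otherwise $\tilde{\sigma}$ exists by (3) and Remark \ref{max} applied to the codimension-one subvariety $E\subset\tilde{Z}$ gives $ord_E\tilde{\sigma}\le p-1$, so
$$ord_Y\sigma=(c-1)(p-1)+ord_E\tilde{\sigma}\le c(p-1).$$
For (2) and the ``moreover'' in (3) I would invoke the codimension-one characterization: for a smooth divisor, $\tilde{\sigma}$ compatibly splits $E$ if and only if $ord_E\tilde{\sigma}=p-1$, equivalently (via the formula above) $ord_Y\sigma=c(p-1)$. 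Assertion (2) then follows by applying Lemma \ref{often}(2) to $\pi\colon\tilde{Z}\to Z$ (which satisfies $\pi_*\mathcal{O}_{\tilde{Z}}=\mathcal{O}_Z$, as $Z$ is smooth): the scheme-theoretic image $\pi(E)=Y$ is compatibly split in $Z$ by the pushdown of $\tilde{\sigma}$, namely $\sigma$ itself.

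The main technical input is the divisor case quoted above: a splitting $\tilde{\sigma}$ compatibly splits a smooth divisor $E$ precisely when $ord_E\tilde{\sigma}=p-1$. I expect this to be the only step that is not purely formal; it is a local computation in a formal neighborhood of a smooth point of $E$, where one chooses coordinates $x_1,\ldots,x_n$ with $E=\{x_1=0\}$, writes $\tilde{\sigma}=f(dx_1\wedge\cdots\wedge dx_n)^{1-p}$, and translates the condition $\tilde{\sigma}(x_1\cdot-)\subseteq(x_1)$ into a vanishing condition on a specific Taylor coefficient of $f$. Together with the normalization coming from $\tilde{\sigma}$ being a splitting (coefficient of $(x_1\cdots x_n)^{p-1}$ in $f$ equals $1$), this pins down $ord_E\tilde{\sigma}=p-1$ and closes the loop.
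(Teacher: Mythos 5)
Your proof is correct and gives a genuine argument where the paper only cites: the paper derives (1) directly from Remark \ref{max}, refers to \cite[Proposition~5]{mr} for (2), and refers to \cite[Proposition~2.1]{lmp} for (3). What you have written is, in effect, a reconstruction of the content of those references, organized around the blow-up formula $K_{\tilde Z}=\pi^*K_Z\otimes\mathcal{O}_{\tilde Z}((c-1)E)$, which correctly translates $ord_Y\sigma$ into $ord_E\tilde\sigma$ via $ord_Y\sigma=(c-1)(p-1)+ord_E\tilde\sigma$ whenever the lift exists. Your verification that $\tilde\sigma$ is again a splitting (a section of $\mathcal{O}_{\tilde Z}$ that restricts to $1$ on a dense open, hence is $1$ by integrality of $\tilde Z$) and your appeal to the torsion-free sheaf for uniqueness both match the remark in the paper preceding the proposition.

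Two points worth flagging, neither of which is an error. First, your proof of (1) is more circuitous than necessary: the paper obtains (1) by applying Remark \ref{max} directly to $Y\subset Z$, which is codimension $c$, with no need to pass through the blow-up; you instead first establish (3) and then apply Remark \ref{max} to $E\subset\tilde Z$, which is valid but takes the long way around. Second, you correctly isolate the one genuinely non-formal ingredient, namely that a splitting $\tilde\sigma$ compatibly splits a smooth divisor $E$ if and only if $ord_E\tilde\sigma=p-1$, and your sketch of the local computation (that the splitting normalization forces the coefficient of $(x_1\cdots x_n)^{p-1}$ to be $1$, giving $ord_E\le p-1$; and that $\tilde\sigma(x_1 F_*\mathcal{O})\subseteq(x_1)$ forces $ord_E\ge p-1$) is the right shape, though one should be careful that the compatibility condition is not literally the vanishing of a single Taylor coefficient but of the entire family of coefficients $a_{p\gamma+(p-1)\mathbf 1}$ with $\gamma_1=0$ that would otherwise contribute a constant in $x_1$; spelling this out would be needed in a full write-up. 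Overall this is a correct and considerably more explicit treatment than the paper's proof-by-citation.
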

\begin{proof}
Assertion(1) follows in view of Remark \ref{max} (since $\sigma$
is a splitting). Assertion (2) follows from the local description
as in~\cite[Proposition~5]{mr}. For a proof of assertion (3),
see \cite{lmp}, Proposition~2.1.
\end{proof}

Now let $Z=G/P\times G/P$, and $Y$ the diagonal copy of $G/P$
in~$Z$. We have:
\begin{theorem}[cf.
\cite{lmp}]\label{t.lmp} Assume that the characteristic~$p$ is odd. If $E$ is
compatibly split in $\tilde{Z}$, or, equivalently, if there is a
splitting of $Z$ compatibly splitting~$Y$ with maximal
multiplicity, then the Gaussian map is surjective for~$X=G/P$.
\end{theorem}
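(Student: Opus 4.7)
The plan is to convert surjectivity of the Gaussian into a cohomology vanishing statement on $Z$, transfer that vanishing onto the blow-up $\tilde Z$ via $\pi$, and then kill it using the hypothesized compatible splitting of $E$.

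First I would set $\mathcal{M} := p_1^* L \otimes p_2^* M$, which is ample on $Z = G/P \times G/P$. Tensoring the conormal sequence $0 \to \mathcal{I}_\Delta^2 \to \mathcal{I}_\Delta \to \mathcal{I}_\Delta/\mathcal{I}_\Delta^2 \to 0$ with $\mathcal{M}$ and using the identifications $\mathcal{I}_\Delta/\mathcal{I}_\Delta^2 \cong \Omega^1_X$ and $\mathcal{M}|_\Delta \cong L \otimes M$, the associated long exact cohomology sequence reduces surjectivity of the Gaussian on $H^0$ to the vanishing $H^1(Z, \mathcal{I}_\Delta^2 \otimes \mathcal{M}) = 0$.

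The next step is to transfer this vanishing to the blow-up. Since $\Delta \cong G/P$ is smooth of codimension $c = \dim G/P$ in $Z$, the standard computation of higher direct images under a blow-up along a smooth center gives $\pi_* \mathcal{O}_{\tilde Z}(-nE) = \mathcal{I}_\Delta^n$ and $R^i \pi_* \mathcal{O}_{\tilde Z}(-nE) = 0$ for $i > 0$ and all $n \geq 0$. Combined with the projection formula and the Leray spectral sequence, this turns the task into proving $H^1(\tilde Z, \pi^*\mathcal{M} \otimes \mathcal{O}_{\tilde Z}(-2E)) = 0$.

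Finally I would exploit the splitting. The hypothesis provides a Frobenius splitting $\tilde\sigma$ of $\tilde Z$ compatibly splitting the divisor $E$, and the aim is to apply a Mehta--Ramanathan-type Kodaira vanishing in the $F$-split setting to conclude $H^1(\tilde Z, \pi^*\mathcal{M}(-2E)) = 0$. The main obstacle is that $\pi^*\mathcal{M}$ is only nef, not ample, on $\tilde Z$, so a direct appeal to Kodaira vanishing fails. The way around this is to iterate the absolute Frobenius: for $n$ large enough the twisted bundle $(\pi^*\mathcal{M})^{p^n} \otimes \mathcal{O}_{\tilde Z}(-(2p^n - 1)E)$ becomes genuinely ample (since $\pi^*\mathcal{M}$ is big and nef while $-E$ is $\pi$-ample), and the compatibility of $\tilde\sigma$ with $E$ produces an injection of $H^1(\tilde Z, \pi^*\mathcal{M}(-2E))$ into the corresponding $H^1$ of that Frobenius twist, which then vanishes by Kodaira vanishing for $F$-split varieties. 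This last step is the crux: it is precisely where both the compatibility of the splitting with $E$ (equivalently, the maximal multiplicity of $\sigma$ along $Y$) and the Frobenius-split hypothesis are used essentially, rather than either ingredient alone.
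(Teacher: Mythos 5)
Your first two reductions are fine: the conormal sequence reduces surjectivity of the Gaussian to $H^1(Z,\mathcal{I}_\Delta^2\otimes\mathcal{M})=0$, and the blow-up computation ($R^i\pi_*\mathcal{O}_{\tilde Z}(-nE)=0$ for $i>0$, $\pi_*\mathcal{O}_{\tilde Z}(-nE)=\mathcal{I}_\Delta^n$) converts this to $H^1(\tilde Z,\pi^*\mathcal{M}(-2E))=0$. This is indeed the skeleton of the argument in \cite{lmp} (which the paper cites rather than reproves).

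The final step, however, has a genuine gap. The claim that $(\pi^*\mathcal{M})^{p^n}\otimes\mathcal{O}_{\tilde Z}(-(2p^n-1)E)$ ``becomes genuinely ample'' for $n\gg 0$ because ``$\pi^*\mathcal{M}$ is big and nef while $-E$ is $\pi$-ample'' is false, and this is where the proof breaks. After dividing by $p^n$, the $\mathbb{Q}$-divisor classes you produce converge to $\pi^*\mathcal{M}(-2E)$, so ampleness for $n\gg 0$ would force $\pi^*\mathcal{M}(-2E)$, or at least $\pi^*\mathcal{M}(-E)$, to be ample already. This is not the case, even in the simplest examples: for $X=\mathbb{P}^n$ and $\mathcal{M}=\mathcal{O}(1,1)$ the sections of $\pi^*\mathcal{M}(-E)$ are the $2\times 2$ minors of the generic $2\times(n+1)$ matrix, giving a morphism $\tilde Z\to \mathrm{Gr}(2,n+1)$ whose fibers are isomorphic to $\mathbb{P}^1\times\mathbb{P}^1$; hence $\pi^*\mathcal{M}(-E)$ is trivial on positive-dimensional subvarieties and is not ample, and no power of it, twisted by a bounded multiple of $-E$, is ample. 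The relative ampleness of $-E$ only implies that $\pi^*\mathcal{M}^m(-E)$ is ample for $m\gg 0$; iterating Frobenius scales the $\mathcal{M}$-exponent and the $E$-coefficient by the \emph{same} factor, so the asymptotic slope never improves and Serre-type vanishing never triggers. (Also, as a side remark, with the $E$-compatible splitting the correct iteration lands you in $(\pi^*\mathcal{M})^{p^n}(-(p^n+1)E)=(\pi^*\mathcal{M}(-E))^{p^n}(-E)$, not $-(2p^n-1)E$.)

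What is actually used in \cite{lmp} is the additional fact, special to $X=G/P$, that $\mathcal{I}_\Delta\otimes\mathcal{M}$ is globally generated for any ample $\mathcal{M}=p_1^*L\otimes p_2^*M$, so that $\pi^*\mathcal{M}(-E)$ is semi-ample (nef and big, not ample). One then combines: (i) the $E$-compatible splitting to get the injection into $H^1(\tilde Z,(\pi^*\mathcal{M}(-E))^{p^n}(-E))$; (ii) vanishing of $H^1(\tilde Z,(\pi^*\mathcal{M}(-E))^{p^n})$ for a semi-ample, big line bundle on a split variety; and (iii) surjectivity of the restriction $H^0(\tilde Z,(\pi^*\mathcal{M}(-E))^{p^n})\to H^0(E,\cdot)$, which again comes from the compatible splitting of $E$. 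The hypothesis $p$ odd enters through $2\le p-1$ in step (i). Without global generation of $\mathcal{I}_\Delta\otimes\mathcal{M}$ and without using the compatible splitting a second time for the restriction map, the Frobenius iteration alone does not close the argument.
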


Let us recall (cf.\cite{lmp}) the following conjecture:

\noindent\textbf{LMP-Conjecture}
    For any $G/P$, there exists a splitting of $Z$ that
compatibly splits the diagonal copy of~$G/P$ with maximal
multiplicity.
\section{Steps leading to a proof of LMP-conjecture for a minuscule
$G/P$}\label{steps} Our proof of the LMP-conjecture for a minuscule
$G/P$ is in the same spirit as in \cite{mp}. We describe below a
sketch of the proof.

\vs.2cm\ni I. \textbf{The splitting $\lambda$ of $G\times^B G/B:$}
For a Schubert variety $X$ in $G/B$, using the $B$-action on $X$,
we may form the twisted fiber space
$$G\times^B X:=G\times
X/(gb,b^{-1}x)\sim(g,x),\ g\in G,b\in B, x\in X$$ For $X=G/B$, we
have a natural isomorphism
\[f:G\times^B G/B\cong G/B\times G/B,\,(g,xB)\mapsto(gB,gxB)\]
 We have
(cf.\cite{mr2}) that there exists a splitting for $G\times^B G/B
(\cong G/B\times G/B)$ compatibly splitting the $G$-Schubert
varieties $G\times^B X$. In fact, by \cite{bk}, Theorem 2.3.8, we
have that this splitting is induced by $\sigma^{p-1}$ (where
$\sigma$ is as in \S\ref{sect}; as in that subsection, one
identifies $\sigma$ with $f^+\otimes f^-$). We shall denote this
splitting of $G\times^B G/B$ by $\lambda$.

\vs.2cm\ni II. \textbf{Order of vanishing of $\lambda$ along
$G\times^B P/B$:} Let $P$ be a (standard) parabolic subgroup. From
the description of $\lambda$, it is clear that the order of
vanishing of $\lambda$ along $G\times^B P/B$ equals $(p-1)ord_{P/B
}\sigma$, where $ord_{P/B} \sigma$ denotes the order of vanishing
of $\sigma$ along $P/B$. For simplicity of notation, let us denote
this order by $q$.

\vs.2cm\ni III. \textbf{Reduction to computing the order of
vanishing of $\sigma$ along $P/B$:} Consider the natural
surjection $\pi:G/B\times G/B\rightarrow G/P\times G/P,
(g_1B,g_2B)\mapsto (g_1P,g_2P)$. Then under the identification
$f:G\times^B G/B\cong G/B\times G/B$, we have that $\pi$ induces a
surjection
\[G\times^B P/B\rightarrow \Delta_{G/P}\] where $\Delta_{G/P}$
denotes the diagonal in $G/P\times G/P$. We now recall the
following Lemma from \cite{lmp}
\begin{lem}
Let $f:X\rightarrow Y$ be a morphism of schemes such that $f^\#:
\mathcal{O}_Y\rightarrow f_*\mathcal{O}_X$ is an isomorphism. Let
$X_1$ be a smooth subvariety of $X$ such that $f$ is smooth
(submersive) along $X_1$. If $X_1$ is compatibly split in $X$ with
maximum multiplicity, then the induced splitting of $Y$ has
maximum multiplicity along $f(X_1)$.
\end{lem}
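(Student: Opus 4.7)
The ``compatibly split'' part of the conclusion is immediate from Lemma \ref{often}(2); the substance of the lemma is the \emph{multiplicity} assertion. Setting $c' := \mathrm{codim}_Y f(X_1)$ and writing $\tau$ for the induced splitting of $Y$, one must show $ord_{f(X_1)}\tau = c'(p-1)$. Applying Remark \ref{max} to $\tau$ already gives $ord_{f(X_1)}\tau \le c'(p-1)$, so the task reduces to producing the matching lower bound, and this can be checked at a general point of $f(X_1)$.

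The plan is to localize at such a general point and pass to nice local coordinates. Choose $x \in X_1$ with $y := f(x) \in Y_1 := f(X_1)$ at which $f$ is smooth and $X$, $Y$, $X_1$, $Y_1$ are all smooth, and write $c := \mathrm{codim}_X X_1$. Flatness of $f$ forces $f^{-1}(Y_1)$ to be smooth of codimension $c'$ in $X$ and to contain $X_1$ with codimension $c-c'$. Pick local parameters $y_1,\ldots,y_m$ on $Y$ with $Y_1 = \{y_1 = \cdots = y_{c'} = 0\}$, set $x_i := f^\# y_i$, and complete to an \'etale-local system $x_1,\ldots,x_{c'},z_1,\ldots,z_{n-m}$ of parameters on $X$ at $x$ in which the $z_j$'s are fibre directions. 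Since $X_1$ is a smooth subvariety of $f^{-1}(Y_1)$ of codimension $c-c'$, we may further arrange that $X_1$ is cut out near $x$ by $x_1,\ldots,x_{c'},z_1,\ldots,z_{c-c'}$.

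The crux is now to make the correspondence $\sigma \mapsto \tau$ explicit in these coordinates. Under the identification of a splitting with its corresponding section of $K^{1-p}$, together with the relative dualizing isomorphism $K_X \simeq f^* K_Y \otimes K_{X/Y}$ on the smooth locus of $f$ and the projection formula, pushing forward along $f$ is realized as a relative trace of Frobenius along the fibres: expanding $\sigma$ in the fibre-monomial basis $\{z_1^{i_1}\cdots z_{n-m}^{i_{n-m}}\;|\;0\le i_k\le p-1\}$ of $F_*\mathcal{O}_X$ over $f^*F_*\mathcal{O}_Y$, the coefficient of the top monomial $z_1^{p-1}\cdots z_{n-m}^{p-1}$ is (after a fixed trivialization of $K_{X/Y}^{1-p}$) the local expression of $\tau$. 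The hypothesis $ord_{X_1}\sigma = c(p-1)$ forces every monomial $x^\alpha z^\beta$ in $\sigma$ with nonzero coefficient to satisfy $|\alpha|+|\beta|\ge c(p-1)$, and such a term contributes to the extracted top $z$-monomial only when $\beta_k = p-1$ for every $k$; of the $|\beta|=(n-m)(p-1)$ so consumed, exactly $(c-c')(p-1)$ comes from the defining $z$'s of $X_1$, leaving at least $c'(p-1)$ degree in the $x_i$'s. Hence $\tau$ lies in $\mathcal{I}_{Y_1}^{c'(p-1)}\cdot K_Y^{1-p}$ locally, giving the desired lower bound. The main technical obstacle is the Grothendieck-duality identification of the push-forward of splittings with the fibre trace above; once this translation is in hand, the proof is just the monomial accounting sketched.
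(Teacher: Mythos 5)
The paper does not supply its own proof of this lemma --- it is quoted verbatim from \cite{lmp} with no argument given --- so there is nothing internal to compare you against; I will instead assess your argument on its own terms.

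Your overall strategy --- the upper bound from Remark \ref{max}, reduction to a general point, an \'etale-local product decomposition, the relative-trace (Cartier/Grothendieck duality) description of the induced splitting, and then the monomial bookkeeping --- is the natural and, I believe, intended line of proof, and the bookkeeping at the end is correct: once only the monomials with $\beta_k=p-1$ for every fibre index $k$ survive the trace, exactly $(c-c')(p-1)$ of the $X_1$-order is used up by $z_1,\dots,z_{c-c'}$, forcing at least $c'(p-1)$ onto $x_1,\dots,x_{c'}$, which is exactly $ord_{Y_1}\tau\ge c'(p-1)$. You correctly isolate the relative-trace identification as the technical crux and do not pretend to have proved it; that is a fair division of labor, though a careful writeup would have to establish it (it amounts to the compatibility of $K_X\cong f^*K_Y\otimes K_{X/Y}$ with the trace isomorphisms $F_*K^{1-p}\cong\mathcal{H}om(F_*\mathcal{O},\mathcal{O})$ under $f_*$, using $f_*\mathcal{O}_X=\mathcal{O}_Y$).

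There is one unflagged gap. You assert that, near a general point, one ``may further arrange that $X_1$ is cut out by $x_1,\dots,x_{c'},z_1,\dots,z_{c-c'}$,'' i.e.\ by $c'$ pulled-back base coordinates together with $c-c'$ fibre coordinates. Unwinding the tangent-space computation, this is equivalent to $T_xX_1+T_xF=T_x f^{-1}(Y_1)$ at a general $x$, i.e.\ to $f|_{X_1}\colon X_1\to f(X_1)$ being smooth at a general point. In characteristic $p$ that is \emph{not} automatic for a dominant map of smooth varieties (it requires separability); so as stated your reduction to this coordinate normal form has a gap, and the lemma itself is arguably imprecisely stated in \cite{lmp} on this point. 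The gap is harmless in the present paper's application --- there $X_1=G\times^B P/B$ is actually all of $f^{-1}(\Delta_{G/P})$, so $c=c'$, no $z$-defining equations are needed, and $f|_{X_1}$ is the smooth fibration $G\times^B P/B\to G/P$ --- but if you want the lemma in the generality stated, you should either add the hypothesis that $f|_{X_1}$ is (generically) smooth onto its image, or argue separately that the order-of-vanishing bound survives a purely inseparable $X_1\to f(X_1)$.
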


\vs.2cm\ni\textbf{Main Reduction:} Hence the LMP-conjecture will hold
for a $G/P$ if we could show that $q$ equals $(p-1)dim\,G/P$,
equivalently that $ord_{P/B} \sigma$ equals
$dim\,G/P$

\ni $(=codim_{G/B}P/B)$.

\begin{definition} A fundamental weight $\omega$ is called
\emph{minuscule} if
$\left<\omega,\beta\right>(={\frac{2(\omega,\beta)}{(\beta,\beta)}})\leq
1$ for all $\beta\in R^+$; the maximal parabolic subgroup
associated to $\omega$ is called a \emph{minuscule parabolic
subgroup}.
\end{definition}

In the following sections, we prove that $ord_{P/B} \sigma$ equals
$dim\,G/P$ for a minuscule $G/P$. This is in fact the line of
proof for the Grassmannian in \cite{mp}, and for the symplectic
and orthogonal Grassmannians in \cite{lrs}; as already mentioned,
our methods (for computing the order of vanishing of sections)
differ from those of \cite{mp,lrs}. In the following section, we
describe the main steps involved in our approach; but first we
include the list of all of the minuscule fundamental weights,
following the indexing of the simple roots as in \cite{bou}:
\[ \begin{array}{lll}
        \mbox{Type } \mathbf{A_n} & : & \mbox{Every fundamental weight is minuscule}\\
        \mbox{Type } \mathbf{B_n} & : & \omega_n\\
        \mbox{Type } \mathbf{C_n} & : & \omega_1\\
        \mbox{Type } \mathbf{D_n} & : & \omega_1,\omega_{n-1},\omega_n\\
        \mbox{Type } \mathbf{E_6} & : & \omega_1, \omega_6\\
        \mbox{Type } \mathbf{E_7} & : & \omega_7.
\end{array}\]
There are no minuscule weights in types $\mathbf{E_8},
\mathbf{F_4}, \mbox{ or } \mathbf{G_2}.$

\section{Steps leading to the determination of $ord_{P/B}\sigma $}\label{follow} We first
describe explicit realizations for $f^+,f^-$, and then describe
the main steps involved in computing the the order of vanishing of
$\sigma$ along $P/B$.

\vs.2cm\ni\textbf{Explicit realizations for $f^+,f^-$:}  We shall
denote a maximal parabolic subgroup corresponding to omitting a
simple root $\alpha_i$ by $P_i$; also, we follow the indexing of
simple roots, fundamental weights etc., as in \cite{bou}. Let
$\omega_1,\cdots,\omega_l$ be the fundamental weights ($l$ being
the rank of $G$). For $1\le d\le l$, let $V(\omega_d)$ be the Weyl
module with highest weight $\omega_d$. One knows (see \cite{jan}
for instance) that the multiplicity of $\omega_d$ (in
$V(\omega_d)$) is $1$. Then $w(\omega_d),w\in W$ give all the
\emph{extremal weights} in $V(\omega_d)$, and these weights again
have multiplicities equal to $1$; of course, it suffices to run
$w$ over a set of representatives of the elements of
$W/W_{P_{d}}$. Given $w\in W$, let us fix representatives
$w^{(d)},1\le d\le l$ for $wW_{P_{d}},1\le d\le l$. Let us fix a
highest weight vector in $V(\omega_d)$ and denote it by
$q_{e^{(d)}}$; denote $w^{(d)}.q_{e^{(d)}}$ by $q_{w^{(d)}}$. Note
in particular that $q_{w_0^{(d)}}$ is a lowest weight vector,
$w_0$ being the element of largest length in $W$. Recall the
following well-known fact (see \cite{jan}) \[H^0(G/P_d,
L(\omega_d))\cong V(\omega_d)^*\] where $V(\omega_d)^*$ is the
linear dual of $V(\omega_d)$. In particular,  $H^0(G/P_d,
L(\omega_d))$ may be identified with the Weyl module
$V(i(\omega_d)),i$ being the Weyl involution (equal to $-w_0$, as
an element of Aut$\,R$), and thus the extremal weights in
$H^0(G/P_d, L(\omega_d))$ are given by $-w^{(d)}(\omega_d)$. We
may choose extremal weight vectors $p_{w^{(d)}}$ in

\ni $H^0(G/P_d, L(\omega_d))$, of weight $-w^{(d)}(\omega_d)$, in
such a way that under the canonical $G$-invariant bilinear form
$(,)$ on $H^0(G/P_d, L(\omega_d))\times V(\omega_d)$, we have,
\[(p_{\theta^{(d)}},q_{\tau^{(d)}})=
\delta_{\theta^{(d)},\tau^{(d)}}\] As a consequence, we have, for
$\tau\in W$, \[p_{\theta^{(d)}}\,|_{X(\tau)}\ne 0\
\Leftrightarrow\  \theta^{(d)}\in X(\tau^{(d)})\leqno{(*)}\] Now
$\rho$ being $\omega_1+\cdots +\omega_l$, we may take $f^+$ (resp.
$f^-$) to be the image of $f^+_1\otimes\cdots\otimes f^+_l$ (resp.
$f^-_1\otimes\cdots\otimes f^-_l$) under the canonical map
\[H^0(G/B, L(\omega_1))\otimes\cdots\otimes H^0(G/B, L(\omega_l))
\rightarrow H^0(G/B, L(\rho))\] given by multiplication of
sections. Hence we may choose \[f^+={\underset{1\le d\le
l}{\prod}}\,p_{w_0^{(d)}},\ f^-={\underset{1\le d\le
l}{\prod}}\,p_{e^{(d)}} .\] Thus, $\sigma$ may be taken to be
\[\sigma=({\underset{1\le d\le
l}{\prod}}\,p_{w_0^{(d)}})({\underset{1\le d\le
l}{\prod}}\,p_{e^{(d)}}) .\] Now $eB$ belongs to every Schubert
variety, and hence in view of $(*)$, $p_{e^{(d)}}\,|_X\ne 0,1\le
d\le l$, for any Schubert variety $X$. In particular,
\[p_{e^{(d)}}\,|_{P/B}\ne 0,1\le d\le l .\] Hence we obtain
\[ord_{P/B}\,\sigma
=ord_{P/B}({\underset{1\le d\le
l}{\prod}}\,p_{w_0^{(d)}})={\underset{1\le d\le
l}\sum}\,ord_{P/B}\,p_{w_0^{(d)}} .\]

Thus we are reduced to computing $ord_{P/B}\,p_{w_0^{(d)}}$.

\vs.2cm\ni\textbf{Computation of $ord_{P/B}\,p_{w_0^{(d)}}$:} Let
us denote the element of largest length in $W_P$ by $\tau_P$ or
just $\tau$ ($P$ having been fixed). Since the $B$-orbit through
$\tau$ (we are denoting $e_\tau$ by just $\tau$) is dense open in
$P/B$, we have
\[ord_{P/B}\,p_{w_0^{(d)}}=ord_{\tau}\,p_{w_0^{(d)}}\] where the
right hand side denotes the order of vanishing of $p_{w_0^{(d)}}$
at the point $e_\tau$. Hence \[ord_{P/B}\,\sigma ={\underset{1\le
d\le l}\sum}\,ord_{\tau}\,p_{w_0^{(d)}} .\] Thus, our problem is
reduced to computing $ord_{\tau}\,p_{w_0^{(d)}}$; to compute this,
we may as well work in $G/P_{d}$. We shall continue to denote the
point $\tau\,P_{d}$ (in $G/P_{d}$) by just $\tau$.

The affine space $\tau B^-\tau^{-1}\cdot \tau\,P_{d}$ ($B^-$ being
the Borel subgroup opposite to $B$) is open in $G/P_d$, and gives
a canonical affine neighborhood for the point
$\tau(=\tau\,P_{d})$; further,the point $\tau\,P_{d}$ is
identified with the origin. The affine co-ordinates in $\tau
B^-\tau^{-1}\cdot \tau\,P_{d}$ may be indexed as
$\{x_\gamma,\gamma\in\tau(R^-\,\setminus\,R^-_{P_{d}})\}$ (here,
$R^-_{P_{d}}$ denotes the set of negative roots of $P_d$). We
recall the following two well known facts:

\vs.1cm\ni\textbf{Fact 1:} For any $f\in H^0(G/P_d,L(\omega_d))$,
the evaluations of ${\frac{\partial f}{\partial x_\gamma}}$ and
$X_\gamma f$ at $\tau(=\tau P_d)$ coincide, $X_\gamma$ being the
element in the Chevalley basis of $Lie\,G$ (the Lie algebra of
$G$), associated to $\gamma$.

\vs.1cm\ni\textbf{Fact 2:} For $f\in H^0(G/P_d,L(\omega_d))$, we
have that $ord_{\tau}\,f$ is the degree of the leading form (i.e.,
form of smallest degree) in the local polynomial expression for
$f$ at $\tau$.

In the sequel, for $f\in H^0(G/P_d,L(\omega_d))$,  we shall denote
the leading form in the polynomial expression for $f$ at $\tau$ by
$LF(f)$. Thus we are reduced to determining
$LF(p_{w_0^{(d)}}),1\le d\le l$.

\subsection{Determination of $LF(p_{w_0^{(d)}})$:}\label{compute}
Toward the determination of $LF(p_{w_0^{(d)}}),1\le d\le l$, we
first look for $\gamma_i$'s in $\tau(R^-\,\setminus\,R^-_{P_{d}})$
such that $X_{\gamma_1}^{n_1}\cdots
X_{\gamma_r}^{n_r}p_{w_0^{(d)}}$, $n_i\ge 1$, is a non-zero
multiple of $p_\tau$ (note that any monomial in the local
expression for $p_{w_0^{(d)}}$ arises from such a collection of
$\gamma_i$'s and $n_i$'s, in view of Fact 1; also note that a
$\gamma_i$ could repeat itself one or more times in
$X_{\gamma_1}^{n_1}\cdots X_{\gamma_r}^{n_r}p_{w_0^{(d)}}$).
Consider such an equality:
\[X_{\gamma_1}^{n_1}\cdots X_{\gamma_r}^{n_r}p_{w_0^{(d)}}=
cp_\tau, c\in k^* .\] Weight considerations imply
\[{\underset{1\le j\le r}{\sum}}n_j\gamma_j+i(\omega_d)=-\tau(\omega_d),
\] $i$ being the Weyl involution. Writing
$\gamma_j=-\tau(\beta_j)$, for a unique $\beta_j\in
R^+\,\setminus\,R^+_{P_d}$, we obtain
\[\tau(\omega_d)+i(\omega_d)={\underset{1\le j\le
r}{\sum}}n_j\tau(\beta_j),\] i.e.,
\[\omega_d+\tau(i(\omega_d))={\underset{1\le j\le
r}{\sum}}n_j\beta_j\] (note that $\tau=\tau^{-1}$).

 Also, using the facts that $p_\tau=c\tau p_{e^{(d)}}$
 (for some non-zero scalar $c$), and
 $X_{\gamma_i}=\tau X_{\beta_i}\tau^{-1}$, we obtain that
 $X_{\gamma_1}^{n_1}\cdots X_{\gamma_r}^{n_r}p_{w_0^{(d)}}$ is a
 non-zero scalar multiple of $p_\tau$ if and only if
 $X_{-\beta_1}^{n_1}\cdots X_{-\beta_r}^{n_r}p_{\tau w_0^{(d)}}$ is a
 non-zero scalar multiple of $p_{e^{(d)}}$. Thus,
\[ord_\tau p_{w_0^{(d)}}=ord_e p_{\tau w_0^{(d)}}.\] Hence,
we obtain that $ord_e p_{\tau w_0^{(d)}}$ equals
min$\,\{{\underset{1\le j\le r}{\sum}}n_j\}$ such that there exist

\ni $\{\beta_1,\cdots,\beta_r;n_1,\cdots,n_r,\ \beta_j\in
R^+\,\setminus\,R^+_{P_d},\,n_j\ge 1\}$ with
$X_{-\beta_1}^{n_1}\cdots X_{-\beta_r}^{n_r}p_{\tau w_0^{(d)}}$
being a non-zero scalar multiple of $p_{e^{(d)}}$.

 Thus in the following sections, for each minuscule $G/P$, we
 carry out Steps 1 \& 2 below. Also,  in view
of the results in \cite{mp,lrs}, we shall first carry out (Steps 1
\& 2 below) for the following minuscule $G/P$'s:

 I. $G$ of Type \textbf{D}, $P=P_1$

 II. $G$ of type \textbf{E}$_6$, $P=P_1,P_6$

 III. $G$ of type \textbf{E}$_7$, $P=P_7$.

 \begin{remark}
In view of the fact that for $G$ of type \textbf{E}$_6$,
$G/P_1\cong G/P_6$, we will restrict our attention to just $G/P_1$
when $G$ is of type \textbf{E}$_6$.
 \end{remark}

\begin{remark}
We need not consider $Sp(2n)/P_1$ (which is minuscule), since it
is isomorphic to $\mathbb{P}^{2n-1}$; further, as is easily seen,
$\mathbb{P}^N\times\mathbb{P}^N$ has a splitting which compatibly
splits the diagonal with maximum multiplicity (one may also deduce
this from \cite{mp} by identifying $\mathbb{P}^N$ with the
Grassmannian of $1$-dimensional subspaces of $k^{N+1}$). It should
be remarked (as observed in \cite{mp}) that for $G=Sp(2n)$,
$\sigma$ (as above) does not have maximum multiplicity along
$P_1/B$.
\end{remark}

 \vs.1cm\ni \textbf{Step 1:} For each $1\le d\le l$, we find the
 expression ${\underset{1\le j\le
l}{\sum}}c_j\alpha_j,\,c_j\in \mathbb{Z}^+$  for
$\omega_d+\tau(i(\omega_d))$ as a non-negative integral linear
combination of simple roots; in fact, as will be seen, we have
that $c_j\ne 0, \forall j$.

 \vs.1cm\ni \textbf{Step 2:} We show that min$\,\{{\underset{1\le j\le
 r}{\sum}}n_j\}$ (with notation as above) is given as follows:

 \textbf{D}$_n$, \textbf{E}$_6$: min$\,\{{\underset{1\le j\le
 r}{\sum}}n_j\}=c_1$

 \textbf{E}$_7$: min$\,\{{\underset{1\le j\le
 r}{\sum}}n_j\}=c_7$

 \ni Towards proving this, we observe that in \textbf{D}$_n$,
 \textbf{E}$_6$, coefficient of $\alpha_1$ in any positive root is
 less than or equal to one, while in \textbf{E}$_7$, the
 coefficient of $\alpha_7$ in any positive root is
 less than or equal to one (see \cite{bou}). Hence for any
 collection

 \ni $\{\beta_1,\cdots,\beta_r;n_1,\cdots,n_r,\ n_j\ge 1\}$
 as above, we have \[{\underset{1\le j\le
 r}{\sum}}n_j\ge\begin{cases} c_1,&\mathrm{\ if\ type\ } \mathbf{D}_n \mathrm{\ or\ }
 {\mathbf{E}}_6\\
 c_7, & \mathrm{\ if\ type\ }
 {\mathbf{E}}_7
\end{cases}\]
We then exhibit a collection $\{\beta_1,\cdots,\beta_r\}$,
$\beta_j\in R^+\,\setminus\,R^+_{P_d},1\le j\le r$ such that

 (a) $\omega_d+\tau(i(\omega_d))={\underset{1\le j\le
r}{\sum}}\beta_j$

(b) The reflections $s_{\beta_j}$'s (and hence the Chevalley basis
elements $X_{-\beta_j}$'s) mutually commute.

(c) For any subset $\{\delta_1,\cdots,\delta_s\}$ of
$\{\beta_1,...,\beta_{r}\}$, $X_{\delta_1}\cdots
X_{\delta_s}p_{\tau w_0^{(d)}}$ is an extremal weight vector (in
$H^0(G/P_d,L(\omega_d))$), and $X_{\beta_1}^{n_1}\cdots
X_{\beta_{r}}^{n_r}p_{\tau w_0^{(d)}}$ is a lowest weight vector (i.e.,
a non-zero scalar multiple of $p_{e^{(d)}}$).

(d) \[{\underset{1\le j\le
 r}{\sum}}n_j=\begin{cases} c_1,&\mathrm{\ if\ type\ } \mathbf{D}_n \mathrm{\ or\ }
 {\mathbf{E}}_6\\
 c_7, & \mathrm{\ if\ type\ }
 {\mathbf{E}}_7
\end{cases}\]

(e) We then conclude (by the foregoing discussion) that
$$ord_{\tau}\,p_{w_0^{(d)}}=\begin{cases} c_1,&\mathrm{\ if\ type\ } \mathbf{D}_n \mathrm{\ or\ }
 {\mathbf{E}}_6\\
 c_7, & \mathrm{\ if\ type\ }
 {\mathbf{E}}_7
\end{cases}$$

\begin{rem}\label{path}
 Thus we obtain a nice realization for $ord_{\tau}\,p_{w_0^{(d)}}$
 (the order of vanishing along $P/B$ of $p_{w_0^{(d)}}$) as being the length of
the shortest path through extremal weights in the weight lattice
connecting the highest weight (namely, $i(\omega_d)$ in
$H^0(G/B,L(\omega_d))$ and the extremal weight $-\tau(\omega_d)$.
\end{rem}

\begin{rem}
For the sake of completeness, we have given the details for the
remaining $G/P$'s in \S \ref{last}.
\end{rem}

 For the convenience of notation, we make the following
\begin{defn}\label{md}
Define $m_d$ to be $ord_{e}\,p_{\tau
w_0^{(d)}}(=ord_{\tau}\,p_{w_0^{(d)}})$
\end{defn}



 We shall treat the cases I,II,III above, respectively
  in the following three sections. In the following sections,
  we will be repeatedly using the
  following:

  \vs.2cm\ni\textbf{Fact 3:} Suppose $p_\theta$ is an extremal
  weight vector in $H^0(G/P_d,L(\omega_d))$ of weight
  $\chi(=-\theta(\omega_d))$, and
  $\beta\in R$ such that
  $(\chi,\beta^*)=r$, for some positive integer $r$. Then
  $X^r_{-\beta}p_\theta$ is a non-zero scalar multiple of the
  extremal weight vector $p_{s_\beta\theta}$
  (here, $(,)$ is a $W$-invariant scalar product on the
  weight lattice, and
  $(\chi,\beta^*)={\frac{2(\chi,\beta)}{(\beta,\beta)}}$).

  The above fact follows from $sl(2)$-theory (note that $p_\theta$ is a
  highest weight vector for the Borel sub group $\theta B^-\theta^{-1}, B^-$
  being the Borel subgroup opposite to $B$).

\section{The minuscule $SO(2n)/P_1$}\label{dn}
Let the characteristic of $k$ be different from $2$. Let $V=k^{2n}$
together with a non-degenerate symmetric bilinear form $(\cdot
,\cdot )$. Taking the matrix of the form $(\cdot ,\cdot )$ (with
respect to the standard basis $\{e_1,\ldots ,e_{2n} \}$ of $V$) to
be $E$, the anti-diagonal ($1,\ldots ,1$) of size $2n \times 2n $.
We may realize $G=SO(V)$ as the fixed point set $SL(V)^\sigma $,
where $\sigma :SL(V) \rightarrow SL(V) $ is given by $\sigma
(A)=E(^tA)^{-1}E$. Set $H=SL(V)$.

Denoting by $T_H$ (resp.\ $B_H$) the maximal torus in $H$
consisting of diagonal matrices (resp.\ the Borel subgroup in $H$
consisting of upper triangular matrices) we see easily that $T_H,
B_H$ are stable under $\sigma$. We set $T_G={T_H}^{\sigma} , \
B_G={B_H}^{\sigma}.$ Then it is well known that $T_G$ is a maximal
torus in $G$ and $B_G$ is a Borel subgroup in $G$. We have a
natural identification of the Weyl group $W$ of $G$ as a subgroup
of $S_{2n}:$
\[W=\{(a_1 \cdots a_{2n}) \in S_{2n} \mid a_i=2n+1-a_{2n+1-i},\ 1
\leq i \leq 2n,\emph{ and }m_w \emph{ is even} \}\]  where
$m_w=\#\{i\le n\,|\,a_i>n\}$. Thus $w=(a_1\cdots a_{2n}) \in W_G$
is known once $(a_1 \cdots a_n)$ is known. In the sequel, we shall
denote such a $w$ by just $(a_1 \cdots a_n)$; also, for $1\le i\le
2n$, we shall denote $2n+1-i$ by $i'$.

For details see \cite{lr}.

Let $P=P_{\alpha_1}$. We preserve the notation from the previous
section; in particular, we denote the element of largest length in
$W_P$ by $\tau$.  We have
$$ \tau=\begin{cases}(12'3'\cdots (n-1)'n),
&\mathrm{if}\ n\mathrm{\ is\ even}\\
                 (12'3'\cdots (n-1)'n'),
&\mathrm{if}\ n\mathrm{\ is\ odd}
\end{cases}
$$

\vs.2cm\ni\textbf{Steps 1 \& 2 of \S \ref{compute}:} As in
\cite{bou}, we shall denote by $\epsilon_j,1\le j\le n$, the
restriction to $T_G$ of the character of $T_H$, sending a diagonal
matrix $diag\{t_1,\cdots,t_n\}$ to $t_j$.

 \ni\textbf{Case 1:} Let
$d\le n-2$. Then $\omega_d=\epsilon_1+\cdots+\epsilon_d$ (cf.
\cite{bou}). We have, $i(\omega_d)=\omega_d$, and
$\tau(\omega_d)=\epsilon_1+\epsilon_{2'}+\cdots+\epsilon_{d'}$.
Hence,
\[\omega_d+\tau(i(\omega_d))=2\epsilon_1=
2(\alpha_1+\cdots+\alpha_{n-2})+\alpha_{n-1}+\alpha_{n}\] (note
that an element in $T_G$ is of the form
$diag\{t_1,\cdots,t_n,t_n^{-1},\cdots,t_1^{-1}\}$, and hence
$\epsilon_{j'}=-\epsilon_j$(we follow \cite{bou} for denoting the
simple roots)). We let $\{\beta_1,\beta_2\}\subset
R^+\,\setminus\,R^+_{P_d}$ be any (unordered) pair of the form
$\{\epsilon_1-\epsilon_j, \epsilon_1+\epsilon_j,d+1\le j\le n\}$.
Clearly, $s_{\beta_1}, s_{\beta_2}$ commute (since,
$(\beta_1,\beta_2^*)=0$), and
$\omega_d+\tau(i(\omega_d))=\beta_1+\beta_2$. Also,
\[\begin{gathered}{(-\tau w_0^{(d)}(\omega_d)},\beta_j^*)=(\tau
(\omega_d),\beta_j^*)=1,j=1,2 ;\\  (-\tau
w_0^{(d)}(\omega_d)-\beta_j,\beta_m^*)=(\tau
(\omega_d)-\beta_j,\beta_m^*)=1,\ j,m\in\{1,2\},\ \mathrm{and\
}j,m\
\mathrm{distinct};\\
-\tau w_0^{(d)}(\omega_d)-\beta_1-\beta_2=\tau
(\omega_d)-\beta_1-\beta_2=-\omega_d=
-(\epsilon_1+\cdots+\epsilon_d).
\end{gathered}
\] From this, (a)-(c) in Step 2 of \S \ref{compute} follow
for the above choice of $\{\beta_1,\beta_2\}$; (d) in Step 2 is
obvious. Hence $m_d=2,1\le d\le n-2$ (recall $m_d$ from Definition
\ref{md}).

\vs.2cm\ni\textbf{Case 2:} $d=n-1$. We have, $i(\omega_{n-1})$
equals $\omega_{n-1}$ or $\omega_{n}$, according as $n$ is even or
odd.

\ni If $n$ is even, then
$\tau(i(\omega_{n-1}))=\tau(\omega_{n-1})={\frac{1}{2}}
(\epsilon_1+\epsilon_{2'}+\cdots+\epsilon_{(n-1)'}-\epsilon_n)$.

\ni If $n$ is odd, then
$\tau(i(\omega_{n-1}))=\tau(\omega_{n})={\frac{1}{2}}
(\epsilon_1+\epsilon_{2'}+\cdots+\epsilon_{(n-1)'}+\epsilon_{n'})$

\ni $={\frac{1}{2}}
(\epsilon_1+\epsilon_{2'}+\cdots+\epsilon_{(n-1)'}-\epsilon_n)$.

Thus in either case, $\tau(i(\omega_{n-1}))={\frac{1}{2}}
(\epsilon_1+\epsilon_{2'}+\cdots+\epsilon_{(n-1)'}-\epsilon_n)$.
Hence
\[\omega_{n-1}+\tau(i(\omega_{n-1}))=\epsilon_1-\epsilon_n=
\alpha_1+\cdots+\alpha_{n-2}+\alpha_{n-1}\]
 which is clearly a root in $R^+\,\setminus\,R^+_{P_{n-1}}$. Hence
 taking $\beta_1$ to be $\epsilon_1-\epsilon_n$, we find that
 $\{\beta_1\}$ (trivially) satisfies (a)-(c) in Step 2 of \S \ref{compute}
  follow; (d) in Step 2 is obvious. Hence
$m_{n-1}=1$.

\vs.2cm\ni\textbf{Case 3:} $d=n$. Proceeding as in Case 2, we
have,
\[\omega_{n}+\tau(i(\omega_{n}))=\epsilon_1+\epsilon_n=\alpha_1+\cdots+\alpha_{n-2}+\alpha_{n}\]
 which is clearly a root in $R^+\,\setminus\,R^+_{P_{n}}$. As in
 case 2, we conclude that $m_n=1$.

 \begin{thm}\label{main1}
The LMP conjecture holds for the minuscule $G/P$, $G,P$ being as
above.
 \end{thm}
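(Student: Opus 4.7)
The plan is simply to assemble the work already done in Cases 1, 2, and 3 with the reductions of Sections \ref{steps} and \ref{follow}. By the Main Reduction of Section \ref{steps}, the LMP conjecture for $G/P_1$ follows once we check $ord_{P/B}\sigma = \dim G/P_1$. From the explicit factorization
$$\sigma = \Bigl(\prod_{d=1}^{n} p_{w_0^{(d)}}\Bigr)\Bigl(\prod_{d=1}^{n} p_{e^{(d)}}\Bigr)$$
derived in Section \ref{follow}, together with the fact $(*)$ which guarantees $p_{e^{(d)}}|_{P/B}\ne 0$ for every $d$, this further reduces to the identity $ord_{P/B}\sigma = \sum_{d=1}^{n} m_d$.

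I would then substitute the values established in the three cases above: $m_d = 2$ for $1\le d\le n-2$ (Case 1), and $m_{n-1}=m_n=1$ (Cases 2 and 3). Summing yields
$$ord_{P/B}\sigma \;=\; 2(n-2)+1+1 \;=\; 2n-2.$$

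Finally, I would verify the dimension match: $SO(2n)/P_1$ is the $(2n-2)$-dimensional smooth quadric in $\mathbb{P}^{2n-1}$, so $\dim G/P_1 = 2n-2$, which is exactly the sum computed above. Applying the Main Reduction of Section \ref{steps} then yields the LMP conjecture for $SO(2n)/P_1$, and by Theorem \ref{t.lmp} the surjectivity of the Gaussian for $X = SO(2n)/P_1$ in all odd characteristics. There is no substantive obstacle at this stage: the delicate representation-theoretic work---verifying conditions (a)--(c) of Step 2 in each of the three cases and the minimality lower bound $\sum n_j \ge c_1$ arising from the fact that $\alpha_1$ appears with coefficient at most one in every positive root of $\mathbf{D}_n$---has already been carried out in the case analysis, so the theorem is a short bookkeeping consequence.
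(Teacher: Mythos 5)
Your proposal is correct and follows essentially the same route as the paper: invoke the Main Reduction and the factorization of $\sigma$ from \S\ref{follow}, sum the $m_d$'s computed in Cases 1--3 to get $2(n-2)+1+1=2n-2$, and match this against $\mathrm{codim}_{G/B}P/B=\dim G/P_1=2n-2$. The only cosmetic addition is your identification of $SO(2n)/P_1$ as the smooth quadric $Q_{2n-2}$, which the paper leaves implicit.
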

 \begin{proof}
From \S \ref{steps} (see ``Main reduction" in that section), we
just need to show that ${\underset{1\le d\le n}{\sum}}ord_\tau
p_{w^{(d)}_0}$ equals codim$_{G/B}P/B$. From the above
computations, and the discussion in \S \ref{follow},
 we have ${\underset{1\le d\le
n}{\sum}}ord_\tau p_{w^{(d)}_0}$ equals ${\underset{1\le d\le
n}{\sum}}m_d=2n-2$ which is precisely codim$_{G/B}P/B$.
 \end{proof}

\section{Exceptional Group E$_6$}\label{e6}

Let $G$ be simple of type $E_6$. Let $P=P_1$. As in the previous
sections, let $\tau$ be the unique element of largest length in
$W_P$.

\vs.2cm\ni\textbf{Step 1 \& 2 of \S \ref{compute}:} Note that
$\tau$ is the unique element of largest length inside the Weyl
group of type D$_5$; we have that D$_5$ sits inside of E$_6$ as
\[\xymatrix@-10pt{
 \mbox{\textcircled{3}}\ar@{-}[r] & \mbox{\textcircled{4}}\ar@{-}[r]\ar@{-}[d] & \mbox{\textcircled{5}}\ar@{-}[r] & \mbox{\textcircled{6}} \\
 & \mbox{\textcircled{2}} & &
}\] Thus, for $2\leq j\leq 6$, we have $\tau
(\alpha_j)=-i(\alpha_j )$, $i$ being the Weyl involution of D$_5$;
in this case, we have
\[\tau(\alpha_2)=-\alpha_3,\tau(\alpha_3)=-\alpha_2,\tau(\alpha_j)=-\alpha_j
,j=4,5,6 . \leqno{(*)}\] Thus, using the Tables in \cite{bou} , to
find $\tau(i(\omega_d)),1\le d\le 6$, as a linear sum (with
rational coefficients) of the simple roots, it remains to find
$\tau (\alpha_1)$.

Let $\tau(\alpha_1)={\underset{1\le j\le 6}{\sum}}
a_j\alpha_j,a_j\in\mathbb{Z}$. Since $\alpha_1\not\in R_P$ (the
root system of $P$), we have that $\tau(\alpha_1)\not\in R_P$.
Hence $a_1\ne 0$; further, $\tau(\alpha_1)\in R^+$ (since,
clearly, $l(\tau s_{\alpha_1})=l(\tau)+1$). Hence, $a_1>0$; in
fact, we have, $a_1=1$ (since any positive root in the root system
of E$_6$ has an $\alpha_1$ coefficient $\le 1$). Using $(*)$ above,
and the following linear system, we determine the remaining
$a_j$'s:
\begin{eqnarray*}
 2a_2-a_4 & = & \left<\tau(\alpha_1) , \alpha_2^\ast\right> = \left<\alpha_1,\tau(\alpha_2^\ast )\right> = \left<\alpha_1,-\alpha_3^\ast\right>=1\\
 2a_3-a_1-a_4 & = & \left<\tau(\alpha_1) , \alpha_3^\ast\right> = \left<\alpha_1,\tau(\alpha_3^\ast )\right> = \left<\alpha_1,-\alpha_2^\ast\right>=0\\
 2a_4-a_2-a_3-a_5 & = & \left<\tau(\alpha_1) , \alpha_4^\ast\right> = \left<\alpha_1,\tau(\alpha_4^\ast )\right> = \left<\alpha_1,-\alpha_4^\ast\right>=0\\
 2a_5-a_4-a_6 & = & \left<\tau(\alpha_1) , \alpha_5^\ast\right> = \left<\alpha_1,\tau(\alpha_5^\ast )\right> = \left<\alpha_1,-\alpha_5^\ast\right>=0\\
 2a_6-a_5 & = & \left<\tau(\alpha_1) , \alpha_6^\ast\right> = \left<\alpha_1,\tau(\alpha_6^\ast )\right> = \left<\alpha_1,-\alpha_6^\ast\right>=0
 \end{eqnarray*}
Either one may just solve the above linear system or use the
properties of the root system of type \textbf{E}$_6$ to
 quickly solve for $a_j$'s. For instance, we have, $a_6\ne 0$; for,
  $a_6= 0$ would imply (working with the last equation and up) that
 $a_4= 0$ which in turn would imply (in view of the first equation) that
  $a_2={\frac{1}{2}}$, not possible. Hence $a_6\ne 0$, and
  in fact equals $1$ (for the same reasons as in
   concluding that $a_1=1$).
   Once again working backward in the linear system, $a_5=2,a_4=3$; hence
   from the first equation, we obtain, $a_2=2$. Now the second
    equation implies that $a_3=2$. Thus we
    obtain $$\tau(\alpha_1 ) = \alpha_1+2\alpha_2 +
2\alpha_3+3\alpha_4+2\alpha_5+\alpha_6 =
\begin{pmatrix} 1 & 2 & 3 & 2 & 1 \\ & & 2  & &\end{pmatrix}.$$

For $d\in\{1,\ldots ,6\}$, we shall now describe
$\{\beta_1,\cdots,\beta_r\mid \beta_i\in R^+\setminus R^+_{P_d}\}$
which satisfies the conditions (a)-(d) in Step 2 of \S
\ref{compute}.

For convenience, we list the fundamental weights here:
\[\omega_1 =
\frac{1}{3}\begin{pmatrix} 4 & 5 & 6 & 4 & 2 \\ & & 3 & &
\end{pmatrix},\ \omega_2 = \begin{pmatrix} 1 & 2 & 3 & 2 & 1 \\ & & 2 & & \end{pmatrix},\ \omega_3 = \frac{1}{3}\begin{pmatrix} 5 & 10 & 12 & 8 & 4 \\ & & 6 & & \end{pmatrix} \]
\[\omega_4 = \begin{pmatrix} 2 & 4 & 6 & 4 & 2  \\ & & 3 & & \end{pmatrix},\ \omega_5 = \frac{1}{3}\begin{pmatrix} 4 & 8 & 12 & 10 & 5 \\ & & 6 & & \end{pmatrix},\ \omega_6 = \frac{1}{3}\begin{pmatrix} 2 & 4 & 6 & 5 & 4 \\ & & 3 & & \end{pmatrix}\]
\vspace{.2 in}

\vs.2cm\ni \textbf{Case 1:} $d=1$.

\ni We have, $i(\omega_1 ) = \omega_6 $. Hence using (from above),
the expression for $\omega_6$ as a (rational) sum of simple roots,
and the expressions for $\tau(\alpha_j), j=1,\cdots,6$, we obtain
\[\tau (i(\omega_1 ))+\omega_1 =\begin{pmatrix} 2 & 2 & 2 & 1 & 0
\\ & & 1 & &
\end{pmatrix} .\] We let $\{\beta_1,\beta_2\}$
 be the unordered pair of roots:
\[\begin{pmatrix} 1 & 1 & 1 & 0 & 0
\\ & & 0 & &
\end{pmatrix}, \begin{pmatrix} 1 & 1 & 1 & 1 & 0
\\ & & 1 & &
\end{pmatrix} .\] Clearly $\beta_1,\beta_2$ are in
$R^+\setminus R^+_{P_1}$, and the reflections
$s_{\beta_1},s_{\beta_2}$ commute (since $\beta_1+\beta_2$ is not
a root). Further, $\tau (i(\omega_1 ))+\omega_1= \beta_1+\beta_2$
. Also,
\[\begin{gathered}
 \left<-\tau w^{(1)}_0(\omega_1),\beta_j^*\right>=
 \left<\tau (i(\omega_1)),\beta_j^*\right>=
\left<\beta_1+\beta_2-\omega_1,\beta_j^*\right>=1,\ j=1,2;\\
\left<-\tau
w^{(1)}_0(\omega_1)-\beta_l,\beta_j^*\right>=\left<\beta_j-\omega_1,\beta_j^*\right>=1,
\ j,l\in\{1,2\},\mathrm{\ and\ } j,l \mathrm{\ distinct};\\
-\tau w_0^{(1)}(\omega_1)-\beta_1-\beta_2=-\omega_1 .
\end{gathered}\] From this, (a)-(c) in Step 2 of \S \ref{compute} follow
for the above choice of $\{\beta_1,\beta_2\}$; (d) in Step 2 is
obvious. Hence $m_1=2$ (cf. Definition \ref{md}).

\vs.2cm\ni \textbf{Case 2:} $d=2$.

\ni We have $i(\omega_2 )=\omega_2$. As in case 1, using the
expression for $\omega_2$ as a (rational) sum of simple roots, and
the expressions for $\tau(\alpha_j), j=1,\cdots,6$, we obtain
 $$\tau(\omega_2 )+\omega_2 = \begin{pmatrix} 2 & 2 & 3 & 2 & 1
\\ & & 2 & & \end{pmatrix} .$$
We let $\{\beta_1,\beta_2\}$  be the unordered pair of roots:
\[ \begin{pmatrix} 1 & 1 & 2 & 2 & 1 \\ & & 1 & & \end{pmatrix},\
 \begin{pmatrix} 1 & 1 & 1 & 0 & 0 \\ & & 1 & & \end{pmatrix} .\]
Clearly $\beta_1,\beta_2$ are in $R^+\setminus R^+_{P_2}$, and the
reflections $s_{\beta_1},s_{\beta_2}$ commute (since
$\beta_1+\beta_2$ is not a root). Further, $\tau(\omega_2
)+\omega_2 = \beta_1+\beta_2$.  We have
\[\begin{gathered}
 \left<-\tau w^{(2)}_0(\omega_2),\beta_j^*\right>=
 \left<\tau (\omega_2),\beta_j^*\right>=
\left<\beta_1+\beta_2-\omega_2,\beta_j^*\right>=1,\ j=1,2;\\
\left<-\tau
w^{(2)}_0(\omega_2)-\beta_l,\beta_j^*\right>=\left<\beta_j-\omega_2,\beta_j^*\right>=1,
\ j,l\in\{1,2\},\mathrm{\ and\ } j,l \mathrm{\ distinct};\\
-\tau w_0^{(2)}(\omega_2)-\beta_1-\beta_2=-\omega_2.
\end{gathered}\] As in case 1, (a)-(c) in Step 2 of \S \ref{compute} follow
for the above choice of $\{\beta_1,\beta_2\}$; (d) in Step 2 is
also clear. Hence $m_2=2$.

\vs.2cm The discussion in the remaining cases are similar; in each
case we will just give the expression for
$\tau(i(\omega_d))+\omega_d$ as an element in the root lattice,
and the choice of $\{\beta_1,\cdots,\beta_r\}$ in
$R^+\,\setminus\,R^+_{P_d}$ which satisfy the conditions (a)-(d)
in Step 2 of \S \ref{compute}. Then deduce the value of $m_d$.

\vs.2cm\ni \textbf{Case 3:} $d=3$.

\ni We have $i(\omega_3)=\omega_5$. Further,
$$\tau(\omega_5 )+\omega_3 = \begin{pmatrix} 3 & 4 & 4 & 2 & 1
\\ & & 2 & & \end{pmatrix}.$$  We let $\{\beta_1,\beta_2,\beta_3\}$
be the unordered triple of roots:
\[ \begin{pmatrix} 1 & 1 & 1 & 1 & 1 \\ & & 0 & & \end{pmatrix},\
 \begin{pmatrix} 1 & 2 & 2 & 1 & 0 \\ & & 1 & & \end{pmatrix},\
  \begin{pmatrix} 1 & 1 & 1 & 0 & 0 \\ & & 1 & & \end{pmatrix}.\]
Then we have $\tau(\omega_3 )+\omega_3 = \beta_1
+\beta_2+\beta_3$. Reasoning as in case 1, we conclude
$m_3=3$.

\vs.2cm\ni \textbf{Case 4:} $d=4$.

\ni We have $i(\omega_4)=\omega_4$. Further, $$\tau (\omega_4
)+\omega_4 =
\begin{pmatrix} 4 & 5 & 6 & 4 & 2 \\ & & 3 & & \end{pmatrix}.$$  We
let $\{\beta_1,\beta_2,\beta_3,\beta_4\}$ be the unordered
quadruple of roots:
\[ \begin{pmatrix} 1 & 2 & 2 & 2 & 1 \\ & & 1 & & \end{pmatrix},\
 \begin{pmatrix} 1 & 1 & 2 & 1 & 0 \\ & & 1 & & \end{pmatrix}
,\] \[ \begin{pmatrix} 1 & 1 & 1 & 1 & 1 \\ & & 1 & &
\end{pmatrix},\ \begin{pmatrix} 1 & 1 & 1 & 0 & 0 \\ & &
0 & & \end{pmatrix}.\] Then we have $\tau(\omega_4 )+\omega_4 =
\beta_1 +\beta_2+\beta_3+\beta_4$.  Reasoning as in case 1, we
conclude $m_4=4$.

\vs.2cm\ni \textbf{Case 5:} $d=5$.

\ni We have $i(\omega_5)=\omega_3$. Further
$$\tau(\omega_3)+\omega_5 = \begin{pmatrix} 3 & 4 & 5 & 4 & 2
\\ & & 2 & & \end{pmatrix}.$$ We let $\{\beta_1,\beta_2,\beta_3\}$
be the unordered triple of roots:
\[ \begin{pmatrix} 1 & 2 & 3 & 2 & 1 \\ & & 1 & & \end{pmatrix},\
 \begin{pmatrix} 1 & 1 & 1 & 1 & 0 \\ & & 0 & & \end{pmatrix}
,\  \begin{pmatrix} 1 & 1 & 1 & 1 & 1 \\ & & 1 & &
\end{pmatrix}.\] We have, $\tau(\omega_3)+\omega_3 =
\beta_1+\beta_2+\beta_3$.  We proceed as in case 1, and conclude
$m_5=3$.

\vs.2cm\ni \textbf{Case 6:} $d=6$.

We have, $i(\omega_6)=\omega_1$.  Further,
$\tau(\omega_1)+\omega_6 =
\begin{pmatrix} 2 & 3 & 4 & 3 & 2 \\ & & 2 & & \end{pmatrix}$.
We let $\{\beta_1,\beta_2\}$ be the unordered pair of roots:
\[ \begin{pmatrix} 1 & 1 & 1 & 1 & 1 \\ & & 0 & & \end{pmatrix},\
 \begin{pmatrix} 1 & 2 & 3 & 2 & 1 \\ & & 2 & & \end{pmatrix}.\]
We have $\tau(\omega_1)+\omega_6 = \beta_1+\beta_2.$  Proceeding
as in case 1, we conclude $m_6=2$.

\begin{thm}\label{main2}
The LMP conjecture holds for the minuscule $G/P$, $G,P$ being as
above.
 \end{thm}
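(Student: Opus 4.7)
The plan is to invoke the Main Reduction established in \S\ref{steps}: the LMP conjecture for the minuscule $G/P_1$ will hold as soon as $ord_{P/B}\,\sigma$ equals $\dim G/P_1$ (equivalently, $\mathrm{codim}_{G/B}(P/B)$). By the further reduction in \S\ref{follow}, this amounts to verifying the numerical identity
\[
\sum_{d=1}^{6} m_d \;=\; \dim G/P_1,
\]
where each $m_d = ord_\tau\, p_{w_0^{(d)}}$ has just been determined in Cases 1--6 of this section.

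Collecting the outputs of those six cases gives $m_1 = 2$, $m_2 = 2$, $m_3 = 3$, $m_4 = 4$, $m_5 = 3$, $m_6 = 2$, for a total of $16$. On the other hand, $\dim G/P_1$ equals the number of positive roots of $E_6$ not lying in the root system of the Levi subgroup of $P_1$; since that Levi has type $D_5$, and $E_6$ has $36$ positive roots while $D_5$ has $20$, we obtain $36 - 20 = 16$. The two sides agree, and the Main Reduction then delivers the theorem in exactly the same way as in the proof of Theorem~\ref{main1}.

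The genuine content of the argument is not this final bookkeeping but the six case analyses above. For each $d$ one had to (i) compute $\tau(i(\omega_d)) + \omega_d$ as a non-negative integral combination of simple roots, (ii) read off $c_1$ as the $\alpha_1$-coefficient (this being the correct lower bound for $\sum n_j$ because every positive root of $E_6$ has $\alpha_1$-coefficient at most $1$), and (iii) produce an explicit unordered collection $\{\beta_1,\ldots,\beta_r\} \subset R^+ \setminus R^+_{P_d}$ satisfying conditions (a)--(d) of Step~2 in \S\ref{compute}. The verification of (a)--(c) leans on Fact~3 and on mutual commutativity of the reflections $s_{\beta_j}$, while (d) is forced by matching the $\alpha_1$-coefficient; locating a good collection in each case, especially $d=4$ (where $r = 4$), is the principal obstacle. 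Granted that data, the theorem follows at once by the summation check displayed above.
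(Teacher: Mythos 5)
Your argument is exactly the paper's: invoke the Main Reduction, sum the values $m_1,\ldots,m_6$ computed in Cases 1--6 to get $16$, and check this equals $\mathrm{codim}_{G/B}(P/B)$ (you also supply the short count $36-20=16$ via positive roots, which the paper leaves implicit). Correct, and the same approach.
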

 \begin{proof}
 Proceeding as in the proof of Theorem \ref{main1}, we
just need to show that ${\underset{1\le d\le 6}{\sum}}ord_\tau
p_{w^{(d)}_0}$ equals codim$_{G/B}P/B$. From the above
computations, we have ${\underset{1\le d\le 6}{\sum}}ord_\tau
p_{w^{(d)}_0}$ equals $16 $ which is precisely codim$_{G/B}P/B$.
 \end{proof}

\section{ Exceptional Group E$_7$}\label{e7}

Let $G$ be simple of type E$_7$. Let $P$ be the maximal parabolic
subgroup associated to the fundamental weight $\omega_7$ (the only
minuscule weight in E$_7$).  We preserve the notation of the
previous sections; in particular, $\tau$ will denote the unique
element of largest length in $W_P$.

\vs.2cm\ni\textbf{Step 1 \& 2 of \S \ref{compute}:} Note that
$\tau$ is the unique element of largest length inside the Weyl
group of type \textbf{E}$_6$; \textbf{E}$_6$ sits inside
\textbf{E}$_7$ in the natural way:
\[\xymatrix@-10pt{
\mbox{\textcircled{1}} \ar@{-}[r] & \mbox{\textcircled{3}}\ar@{-}[r] & \mbox{\textcircled{4}}\ar@{-}[r]\ar@{-}[d] & \mbox{\textcircled{5}}\ar@{-}[r] & \mbox{\textcircled{6}} \\
 & & \mbox{\textcircled{2}} & &
}\] Thus, for $\alpha_i,\,1\leq i\leq 6$, we have, $\tau (\alpha_i
)=-i(\alpha_i )$, where $i$ is the Weyl involution on E$_6$. To be
very precise, we have,
\[\tau(\alpha_1)=-\alpha_6,\,\tau(\alpha_3)=-\alpha_5,\,\tau(\alpha_i)
=-\alpha_i,i=2,4.\leqno{(*)}\] Thus, using \cite{bou}, to find
$\tau(i(\omega_d)),1\le d\le 7$, as a linear sum (with rational
coefficients) of the simple roots, it remains to find
$\tau(\alpha_7)$. Towards computing $\tau(\alpha_7)$, we proceed
as in \S \ref{e6}. Let $\tau(\alpha_7)=\sum_{i=1}^7 a_i\alpha_i$,
where $a_i\in \mathbb{Z}$. Since $\alpha_7\not\in R_P$ (the root
system of $P$), we have that $\tau(\alpha_7)\not\in R_P$. Hence
$a_7\ne 0$; further, $\tau(\alpha_7)\in R^+$ (since, clearly,
$l(\tau s_{\alpha_7})=l(\tau)+1$). Hence, $a_7>0$; in fact, we
have, $a_7=1$ (since any positive root in the root system of E$_7$
has an $\alpha_7$ coefficient $\le 1$). Using $(*)$ above, and the
following linear system, we determine the remaining $a_j$'s:
\begin{eqnarray*}
 2a_1-a_3 & = & \left<\tau(\alpha_7) , \alpha_1^\ast\right> = \left<\alpha_7, \tau(\alpha_1^\ast )\right> = \left<\alpha_7,-\alpha_6^\ast\right>=1\\
 2a_2-a_4 & = & \left<\tau(\alpha_7) , \alpha_2^\ast\right> = \left<\alpha_7,\tau(\alpha_2^\ast )\right> = \left<\alpha_7,-\alpha_2^\ast\right>=0\\
 2a_3-a_4-a_1 & = & \left<\tau(\alpha_7) , \alpha_3^\ast\right> = \left<\alpha_7,\tau(\alpha_3^\ast )\right> = \left<\alpha_7,-\alpha_5^\ast\right>=0\\
 2a_4-a_2-a_3-a_5 & = & \left<\tau(\alpha_7) , \alpha_4^\ast\right> = \left<\alpha_7,\tau(\alpha_4^\ast )\right> = \left<\alpha_7,-\alpha_4^\ast\right>=0\\
 2a_5-a_4-a_6 & = & \left<\tau(\alpha_7) , \alpha_5^\ast\right> = \left<\alpha_7,\tau(\alpha_5^\ast )\right> = \left<\alpha_7,-\alpha_3^\ast\right>=0\\
 2a_6-a_5-a_7 & = & \left<\tau(\alpha_7) , \alpha_6^\ast\right> = \left<\alpha_7,\tau(\alpha_6^\ast )\right> = \left<\alpha_7,-\alpha_1^\ast\right>=0
 \end{eqnarray*} The fact that $a_7=1$ together with the last
 equation implies $a_5\ne 0$
 (and hence $a_6\ne 0$, again from the last equation;
 note that all $a_i\in \mathbb{Z}^+$). Similarly,
 from the first equation, we conclude $a_3\ne 0$
 (and hence $a_1\ne 0$). From the first and third equations, we
 conclude $a_4\ne 0$
 (and hence $a_2\ne 0$, in view of the second equation). Thus, all
 $a_i$'s are non-zero. The fifth equation implies that $a_4,a_6$
 are of the same parity, and are in fact both even
  (in view of the second equation); hence $a_6=2$. Now working
  with the last equation and up, we obtain
  \[a_5=3,a_4=4,a_2=2,a_3=3,a_1=2.\] Thus
   \[\tau(\alpha_7) =
2\alpha_1 + 2\alpha_2 + 3\alpha_3 +4\alpha_4 + 3\alpha_5 +
2\alpha_6 +\alpha_7 = \begin{pmatrix} 2 & 3 & 4 & 3 & 2 & 1 \\ & &
2 & & & \end{pmatrix}.\]

We proceed as in \S \ref{e6}. Of course, the Weyl involution for
\textbf{E}$_7$ is just the identity map. For each maximal
parabolic subgroup $P_d,1\le d\le 7$, we will give the expression
for $\tau(i(\omega_d))+\omega_d (= \tau(\omega_d)+\omega_d)$ as an
element in the root lattice, and the choice of
$\{\beta_1,\cdots,\beta_r\}$ in $R^+\,\setminus\,R^+_{P_d}$ which
satisfy the conditions (a)-(d) in Step 2 of \S \ref{compute}. Then
deduce the value of $m_d$.

For convenience, we list the fundamental weights here:
\[\omega_1 = \begin{pmatrix} 2 & 3 & 4 & 3 & 2 & 1 \\ & & 2 & & &
\end{pmatrix}, \]
  \[\omega_2 = \frac{1}{2}\begin{pmatrix} 4 & 8 & 12 & 9 & 6 & 3 \\ & & 7 & & & \end{pmatrix} \
  ,\omega_3 = \begin{pmatrix} 3 & 6 & 8 & 6 & 4 & 2 \\ & & 4 & & &
  \end{pmatrix}, \]
\[\omega_4 = \begin{pmatrix} 4 & 8 & 12 & 9 & 6 & 3 \\ & & 6 & & & \end{pmatrix} \
  ,\omega_5 = \frac{1}{2}\begin{pmatrix} 6 & 12 & 18 & 15 & 10 & 5 \\ & & 9 & & & \end{pmatrix},\]
\[\omega_6 = \begin{pmatrix} 2 & 4 & 6 & 5 & 4 & 2 \\ & & 3 & & & \end{pmatrix}\
  ,\omega_7 = \frac{1}{2} \begin{pmatrix} 2 & 4 & 6 & 5 & 4 & 3 \\ & & 3 & & & \end{pmatrix}.\]

\vs.2cm\ni \textbf{Case 1:} $d=1$.

\ni We have  $\tau(\omega_1 )+\omega_1 = \begin{pmatrix}2 & 3 & 4
& 3 & 2 & 2 \\ & & 2 & & &\end{pmatrix}$.  We let
$\{\beta_1,\beta_2\}$ be the unordered pair of roots:
\[ \begin{pmatrix} 1 & 2 & 3 & 2 & 1 & 1 \\ & & 2 & & &
\end{pmatrix},\   \begin{pmatrix} 1 & 1 & 1 & 1 & 1 & 1 \\ & & 0 & & &\end{pmatrix}.\]
Then we have $\tau(\omega_1 )+\omega_1=\beta_1+\beta_2$, and
$m_1=2$.

\vs.2cm\ni \textbf{Case 2:} $d=2$.
\subsection*{$Q=P_2$}

We have $\tau(\omega_2)+\omega_2 = \begin{pmatrix} 2 & 4 & 6 & 5 &
4 & 3
\\ & & 3 & & & \end{pmatrix}$.  We let $\{\beta_1,\beta_2,\beta_3\}$
 be the unordered triple of roots:
\[  \begin{pmatrix} 0 & 1 & 2 & 2 & 1 & 1 \\ & & 1 & & &
\end{pmatrix},\  \begin{pmatrix} 1 & 1 & 1 & 1 & 1 & 1 & 1 \\
& & 1 & & & \end{pmatrix},\  \begin{pmatrix} 1 & 2 & 3 & 2 & 2 & 1
\\ & & 1 & & & \end{pmatrix}.\]
Then we have $\tau(\omega_2 )+\omega_2=\beta_1+\beta_2+\beta_3$,
and $m_2=3$.

\vs.2cm\ni \textbf{Case 3:} $d=3$.

We have $\tau(\omega_3 )=\begin{pmatrix} 0 & 0 & 0 & 0 & 1 & 2 \\
& & 0 & & & \end{pmatrix}$; thus $\tau(\omega_3)+\omega_3 =
\begin{pmatrix} 3 & 6 & 8 & 6 & 5 & 4 \\ & & 4 & &
&\end{pmatrix}$.  We let $\{\beta_i, i=1,..,4\}$ be the unordered
quadruple of roots:
\[ \begin{pmatrix} 1 & 2 & 3 & 2 & 2 & 1 \\ & & 1 & & &
\end{pmatrix},\  \begin{pmatrix} 1 & 2 & 2 & 2 & 1 & 1 \\
& & 1 & & & \end{pmatrix},\]
\[  \begin{pmatrix} 1 & 1 & 1 & 1 & 1 & 1\\ & & 1 & & &\end{pmatrix}
,\ \begin{pmatrix} 0 & 1 & 2 & 1 & 1 & 1\\ & & 1 & & &
\end{pmatrix}.\]
Then we have $\tau(\omega_3 )+\omega_3={\underset{1\le i\le
4}{\sum}}\beta_i$, and $m_3=4$.

\vs.2cm\ni \textbf{Case 4:} $d=4$.

We have $\tau(\omega_4 )+\omega_4=
\begin{pmatrix} 4 & 8 & 12 & 10 & 8 & 6\\ & & 6 & & &
\end{pmatrix}$.  We let $\{\beta_i, i=1,..,6\}$ be the
unordered $6$-tuple of roots:
\[ \begin{pmatrix} 1& 2 & 3 & 3 & 2 & 1\\ & & 1 & & &
\end{pmatrix},\  \begin{pmatrix} 1 & 2 & 2 & 2 & 2 & 1 \\
& & 1 & & & \end{pmatrix},\  \begin{pmatrix} 0 & 1 & 2 & 2 & 1 & 1
\\ & & 1 & & & \end{pmatrix},\]
\[ \begin{pmatrix} 1 & 2 & 2 & 1 & 1 & 1\\ & & 1 & & &
\end{pmatrix},\  \begin{pmatrix} 1 & 1 & 2 & 1 & 1 & 1\\
& & 1 & & & \end{pmatrix},\  \begin{pmatrix} 0 & 0 & 1 & 1 & 1 &
1\\ & & 1 & & & \end{pmatrix}.\] Then we have $\tau(\omega_4
)+\omega_4={\underset{1\le i\le 6}{\sum}}\beta_i$, and $m_4=6$.

\vs.2cm\ni \textbf{Case 5:} $d=5$.

We have  $\tau(\omega_5)+\omega_5 = \begin{pmatrix} 3 & 6 & 10 & 9
& 7 & 5 \\ & & 5 & & &
\end{pmatrix}$.  We let $\{\beta_i, i=1,..,5\}$ be the unordered
quintuple of roots:
\[  \begin{pmatrix} 1 & 2 & 3 & 3 & 2 & 1 \\ & & 1 & & &
\end{pmatrix},\  \begin{pmatrix} 0 & 1 & 2 & 2 & 1 & 1 \\
& & 1 & & & \end{pmatrix},\
\begin{pmatrix} 0 & 1 & 1 & 1 & 1 & 1
\\ & & 1 & & & \end{pmatrix},\]
\[ \begin{pmatrix} 1 & 1 & 2 & 2 & 2 & 1 \\ & & 1 & & &
\end{pmatrix},\  \begin{pmatrix} 1 & 1 & 2 & 1 & 1 & 1\\ & & 1 & & & \end{pmatrix}.\]
Then we have $\tau(\omega_5 )+\omega_5={\underset{1\le i\le
5}{\sum}}\beta_i$, and $m_5=5$.

\vs.2cm\ni \textbf{Case 6:} $d=6$.

We have $\tau(\omega_6)+\omega_6 =
\begin{pmatrix} 2 & 5 & 8 & 7 & 6 & 4 \\ & & 4 & & &
\end{pmatrix}$.  We let $\{\beta_i, i=1,..,4\}$ be the unordered
quadruple of roots:
\[ \begin{pmatrix} 1 & 2 & 3 & 2 & 2 & 1\\ & & 1 & & &
\end{pmatrix},\  \begin{pmatrix} 1 & 1 & 2 & 2 & 1 & 1\\ & & 1 & & & \end{pmatrix},\]
\[ \begin{pmatrix} 0 & 1 & 2 & 2 & 2 & 1\\ & & 1 & & &
\end{pmatrix},\  \begin{pmatrix} 0 & 1 & 1 & 1 & 1 & 1 \\ & & 1 & & & \end{pmatrix}.\]
Then we have $\tau(\omega_6 )+\omega_6={\underset{1\le i\le
4}{\sum}}\beta_i$, and $m_6=4$.

\vs.2cm\ni \textbf{Case 7:} $d=7$.

We have  $\tau(\omega_7 )+\omega_7 = \begin{pmatrix} 2 & 4 & 6 & 5
& 4 & 3\\ & & 3 & & &
\end{pmatrix}$.  We let $\{\beta_1,\beta_2,\beta_3\}$ be
the unordered triple of roots:
\[\begin{pmatrix} 0 & 1 & 2 & 2 & 2 & 1 \\ & & 1 & & & \end{pmatrix},
\ \begin{pmatrix} 1 & 2 & 2 & 2 & 1 & 1\\ & & 1 & & &
\end{pmatrix},\ \begin{pmatrix} 1 & 1 & 2 & 1 & 1 & 1 \\
& & 1 & & & \end{pmatrix}.\] Then we have $\tau(\omega_7
)+\omega_7=\beta_1+\beta_2+\beta_3$, and $m_7=3$.

We have ${\underset{1\le d\le 7}{\sum}}m_d=27$ which is equal to
codim$_{G/B}P/B$. Hence we obtain
\begin{thm}\label{main3}
The LMP conjecture holds for the minuscule $G/P$, $G,P$ being as
above.
 \end{thm}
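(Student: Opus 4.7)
The plan is to follow exactly the template set by the proofs of Theorem \ref{main1} and Theorem \ref{main2}: all the hard work has been packaged into Cases 1--7 above, and it only remains to invoke the Main Reduction of \S\ref{steps} and add up the numbers. Concretely, by the Main Reduction the LMP-conjecture will hold for $G/P$ as soon as we verify
\[
 \mathrm{ord}_{P/B}\,\sigma \;=\; \mathrm{codim}_{G/B}\,P/B.
\]
By the discussion in \S\ref{follow} this left-hand side equals $\sum_{d=1}^{7} \mathrm{ord}_{\tau} p_{w_0^{(d)}} = \sum_{d=1}^{7} m_d$, with $m_d$ as in Definition \ref{md}.

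So I would write the proof as a one-line tally: the seven case computations above give $m_1=2,\,m_2=3,\,m_3=4,\,m_4=6,\,m_5=5,\,m_6=4,\,m_7=3$, hence
\[
 \sum_{d=1}^{7} m_d \;=\; 2+3+4+6+5+4+3 \;=\; 27.
\]
On the other hand, it is standard (cf.\ \cite{bou}) that for $G$ of type $\mathbf{E}_7$ and $P=P_7$, the minuscule variety $G/P_7$ has dimension $27$, i.e.\ $\mathrm{codim}_{G/B}P/B = \dim G/P_7 = 27$. Thus the two sides match and the Main Reduction delivers the LMP conjecture.

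There is essentially no obstacle left at this stage: the nontrivial content is in Steps 1 \& 2 (the verification in each Case that the displayed collection $\{\beta_1,\ldots,\beta_r\}$ lies in $R^+\!\setminus\!R^+_{P_d}$, sums to $\omega_d + \tau(i(\omega_d))$, consists of mutually orthogonal roots so the $X_{-\beta_j}$'s commute, and that applying them successively to $p_{\tau w_0^{(d)}}$ walks through extremal weight vectors and lands on a nonzero scalar multiple of $p_{e^{(d)}}$ — using Fact~3 repeatedly). Given those verifications (which the Cases state), the proof of Theorem \ref{main3} reduces to the single arithmetic identity $\sum m_d = 27$, parallel to the identity $2n-2 = \dim SO(2n)/P_1$ in Theorem \ref{main1} and $16 = \dim E_6/P_1$ in Theorem \ref{main2}. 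The only thing one might want to double-check is the dimension count $\dim E_7/P_7 = 27$, which however is classical (this is the Cayley plane / $27$-dimensional minuscule variety whose Schubert calculus reflects the $27$ lines on a cubic surface).
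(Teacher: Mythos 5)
Your proposal is correct and follows exactly the same route as the paper: after Cases 1--7 establish $m_1,\ldots,m_7 = 2,3,4,6,5,4,3$, one invokes the Main Reduction from \S\ref{steps} together with the reduction $\mathrm{ord}_{P/B}\sigma = \sum_d m_d$ from \S\ref{follow}, and checks $\sum_{d=1}^{7} m_d = 27 = \dim E_7/P_7 = \mathrm{codim}_{G/B}P/B$; the paper in fact states the theorem immediately after the one-line observation that $\sum m_d = 27$ equals $\mathrm{codim}_{G/B}P/B$. (Minor aside: the ``Cayley plane'' usually refers to $E_6/P_1$, of dimension $16$; $E_7/P_7$ is the $27$-dimensional Freudenthal variety, and the ``$27$ lines'' are tied to $E_6$ rather than $E_7$ — but this is only flavor commentary and does not affect the proof.)
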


\section{The remaining minuscule $G/P$'s}\label{last}
In this section, we give the details for the remaining $G/P$'s
along the same lines as in \S \ref{dn}, \S \ref{e6}, \S \ref{e7},
thus providing an alternate proof for the results of
\cite{mp,lrs}. We fix a maximal parabolic subgroup $P$ of $G$,
denote (as in the previous sections), the element of largest
length in $W_P$ by $\tau$. Also, as in the previous sections (cf.
Definition \ref{md}), we shall denote $ord_{e}\,p_{\tau
w_0^{(d)}}(=ord_{\tau}\,p_{w_0^{(d)}})$ by $m_d,1\le d\le n$.

\subsection{The simple root $\alpha$}\label{simple} While computing
$m_d$, as in \S \ref{dn}, \S \ref{e6}, \S \ref{e7}, in each case,
we work with a simple root $\alpha$ which occurs with a non-zero
coefficient $c_\alpha$ in the expression for
$\omega_d+\tau(i(\omega_d))$ (as a non-negative integral linear
combination of simple roots) and which has the property that in
the expression for any positive root (as a non-negative integral
linear combination of simple roots), it occurs with a coefficient
$\le 1$. This $\alpha$ will depend on the type of $G$, and we
shall specify it in each case. Then as seen in \S \ref{compute},
$m_d\ge c_\alpha$. We shall first exhibit a set of roots
$\beta_1,\cdots,\beta_r, r=c_\alpha$ in
$R^+\,\setminus\,R^+_{P_{d}}$, satisfying (a)-(c) in Step 2 of \S
\ref{compute}, and then conclude that $m_d=c_\alpha$. It will turn
out (as shown below) that in all cases, ${\underset{1\le d\le
n}{\sum}}m_d$ equals codim$\,_{G/B}P/B$, thus proving the LMP
conjecture (and hence Wahl's conjecture).
\subsection{Grassmannian} Let $G=SL(n)$. In this case, every
maximal parabolic subgroup is minuscule. Let us fix a maximal
parabolic subgroup $P:=P_c$; we may suppose that $c\le n-c$ (in
view of the natural isomorphism $G/P_{c}\cong G/P_{n-c}$).
Identifying the Weyl group with the symmetric group $S_n$, we have
\[\tau=(c\,c-1\cdots 1\,n\,n-1\cdots c+1).\]  Let
$\epsilon_j,1\le j\le n$ be the character of $T$ (the maximal
torus consisting of diagonal matrices in $G$), sending a diagonal
matrix to its $j$-th diagonal entry. Note that ${\underset{1\le
j\le n}{\sum}}\epsilon_j=0$ (writing the elements of the character
group additively, as is customary); this fact will be repeatedly
used in the discussion below. Also, for $1\le d\le n-1$, we have
$i(\omega_d)=\omega_{n-d}$. We observe that in the expression for
a positive root (as a non-negative integral linear combination of
simple roots), any simple root occurs with a coefficient $\le 1$.
For each $P_d$, we shall take $\alpha$ to be $\alpha_d$.

\vs.2cm\ni\textbf{Case 1:} Let $d<c$. We have (cf.\cite{bou})
\begin{eqnarray*}\omega_d+\tau(i(\omega_d)) & = & \omega_d+\tau(\omega_{n-d})\\
 & = & (\epsilon_1+\cdots +\epsilon_d) + (\epsilon_1+\cdots +\epsilon_c+
\epsilon_n+\cdots +\epsilon_{c+d+1})\\
 & = & \epsilon_1+\cdots +\epsilon_d-(\epsilon_{c+1}+\cdots +\epsilon_{c+d})\\
 & = & (\epsilon_1-\epsilon_{c+d})+
(\epsilon_2-\epsilon_{c+d-1})+\cdots+(\epsilon_d-\epsilon_{c+1})
\end{eqnarray*} (note that $d<c\le n-c$, and hence $n-d>c$). From the last expression,
it is clear that $c_\alpha=d$ ($c_\alpha$ being as in \S
\ref{simple}); note that every one of the roots in the last
expression belongs to $R^+\,\setminus\,R^+_{P_{d}}$. We now let
$\beta_1,\cdots,\beta_d$  be the unordered $d$-tuple of roots:
\[\epsilon_1-\epsilon_{c+d},
\epsilon_2-\epsilon_{c+d-1},\cdots,\epsilon_d-\epsilon_{c+1}.\]
Then it is easily checked that the above $\beta_j$'s satisfy
(a)-(c) in Step 2 of \S \ref{compute}, and we have $m_d=d$ (in
fact any such grouping will also work).

\vs.2cm\ni\textbf{Case 2:} Let $c\le d\le n-c$. We have
\begin{eqnarray*} \omega_d+\tau(i(\omega_d)) & = & (\epsilon_1+\cdots +\epsilon_d) + (\epsilon_1+\cdots +\epsilon_c+
\epsilon_n+\cdots +\epsilon_{c+d+1})\\ & = & \epsilon_1+\cdots
+\epsilon_c-(\epsilon_{d+1}+\cdots +\epsilon_{d+c})\\
 & = & (\epsilon_1-\epsilon_{d+c})+
(\epsilon_2-\epsilon_{d+c-1})+\cdots+(\epsilon_c-\epsilon_{d+1}).
\end{eqnarray*}
Hence $c_\alpha=c$. We now let
$\beta_1,\cdots,\beta_c$  be the unordered $c$-tuple of roots:
\[\epsilon_1-\epsilon_{d+c},
\epsilon_2-\epsilon_{d+c-1},\cdots,\epsilon_c-\epsilon_{d+1}.\]
Then it is easily checked that the above $\beta_j$'s satisfy
(a)-(c) in Step 2 of \S \ref{compute}, and we have $m_d=c$.

\vs.2cm\ni\textbf{Case 3:} Let $n-c<d\le n-1$. We have
\begin{eqnarray*} \omega_d+\tau(i(\omega_d)) & = & (\epsilon_1+\cdots +\epsilon_d) +
(\epsilon_c+\epsilon_{c-1}+\cdots
+\epsilon_{c+d+1-n})\\
 & = & -(\epsilon_{d+1}+\cdots +\epsilon_n)+(\epsilon_c+\cdots
+\epsilon_{c+d+1-n})\\  & = & (\epsilon_c-\epsilon_{d+1})+
(\epsilon_{c-1}-\epsilon_{d+2})+\cdots+(\epsilon_{c+d+1-n}-
\epsilon_{n})
\end{eqnarray*} (note that $c\le n-c<d$). Hence $c_\alpha=n-d$.
We now let $\beta_1,\cdots,\beta_{n-d}$ be the unordered
$(n-d)$-tuple of roots: \[\epsilon_c-\epsilon_{d+1},
\epsilon_{c-1}-\epsilon_{d+2},\cdots,\epsilon_{c+d+1-n}-
\epsilon_{n}.\] Then it is easily checked that the above choice of
$\beta_j$'s satisfy (a)-(c) in Step 2 of \S \ref{compute}, and we
have $m_d=n-d$.

From the above computations, we have,
\[{\underset{1\le d\le
n-1}{\sum}}m_d=(1+\cdots+c-1)+(n-2c+1)c+(1+\cdots+c-1)=c(n-c)=
codim_{G/B}P/B.\]
\subsection{Lagrangian Grassmannian}\label{cn} Let $V=K^{2n}$ together with a
nondegenerate, skew-symmetric bilinear form $(\cdot  , \cdot )$.
Let $H=SL(V)$ and $G=Sp(V)=\{A\in SL(V) \mid A$ leaves the form
$(\cdot  , \cdot )$ invariant $\}$. Taking the matrix of the form
(with respect to the standard basis $\{ e_1,...,e_{2n} \}$ of $V$)
to be
$$E=\begin{pmatrix}
           0  &  J  \\
           -J &  0
\end{pmatrix}$$
where $J$ is the anti-diagonal $(1,\ldots ,1)$ of size $n\times
n$, we may realize $Sp(V)$ as the fixed point set of a certain
involution $\sigma$ on $SL(V),$ namely $G=H^{\sigma}$, where
$\sigma: H \longrightarrow H$ is given by
$\sigma(A)=E(^t\!\!A)^{-1}E^{-1}$. Denoting by $T_H$ (resp. $B_H$)
the maximal torus in $H$ consisting of diagonal matrices (resp.
the Borel subgroup in $H$ consisting of upper triangular matrices)
we see easily that $T_H, B_H$ are stable under $\sigma$.  We set
$T_G={T^{\sigma}_H} , B_G={B^{\sigma}_H}$. Then it is well known
that $T_G$ is a maximal torus in $G$ and $B_G$ is a Borel subgroup
in $G$. We have a natural identification of the Weyl group $W$ of
$G$ as a subgroup of $S_{2n}:$
\[W=\{(a_1 \cdots a_{2n}) \in S_{2n} \mid a_i=2n+1-a_{2n+1-i},\ 1
\leq i \leq 2n\}.\]   Thus $w=(a_1\cdots a_{2n}) \in W_G$ is known
once $(a_1 \cdots a_n)$ is known.

For details see \cite{lr}.

 In the sequel, we shall denote such a
$w$ by just $(a_1 \cdots a_n)$; also, for $1\le i\le 2n$, we shall
denote $2n+1-i$ by $i'$. The Weyl involution is the identity map.
In type \textbf{C}$_n$, we have that in the expression for a
positive root (as a non-negative integral linear combination of
simple roots), the simple root $\alpha_n$ occurs with a
coefficient $\le 1$. For all $P_d, 1\le d\le n$, we shall take
$\alpha$ (cf. \S \ref{simple}) to be $\alpha_n$.

Let $P=P_n$. Then $\tau=(n\cdots 1)$.
 Let $1\le d\le n$. We have
\[\omega_d+\tau(\omega_d) =(\epsilon_1+\cdots +\epsilon_d) +
(\epsilon_n+\cdots +\epsilon_{n+1-d}).
\] If $d< n+1-d$, then each $\epsilon_j$
in the above sum are distinct. Hence writing
\[\omega_d+\tau(\omega_d)=(\epsilon_1+\epsilon_{n})+
(\epsilon_{2}+\epsilon_{n-1})+\cdots+(\epsilon_{d}+
\epsilon_{n+1-d})\] we have that each root in the last sum belongs
to $R^+\,\setminus\,R^+_{P_{d}}$ (since each of the roots clearly
involves $\alpha_d$); further, each of the roots involves
$\alpha_n$ with coefficient equal to $1$. Hence $c_\alpha=d$. We
let $\beta_1,\cdots,\beta_{d}$  be the unordered $d$-tuple of
roots:
\[\epsilon_1+\epsilon_{n},
\epsilon_{2}+\epsilon_{n-1},\cdots,\epsilon_{d}+
\epsilon_{n+1-d}.\] If $d\ge n+1-d$, then
\begin{eqnarray*} \omega_d+\tau(\omega_d) & = & (\epsilon_1+\cdots +\epsilon_{n-d}) +
(\epsilon_n+\cdots
+\epsilon_{d+1})+(2\epsilon_{n-d+1}+\cdots+2\epsilon_d)\\
 & = & (\epsilon_1+\epsilon_{n})+
(\epsilon_{2}+\epsilon_{n-1})+\cdots+(\epsilon_{n-d}+
\epsilon_{d+1})+ 2\epsilon_{n-d+1}+\cdots+2\epsilon_d .
\end{eqnarray*} Again, we have that each root in the last sum belongs
to $R^+\,\setminus\,R^+_{P_{d}}$, and involves $\alpha_n$ with
coefficient equal to $1$. Hence we obtain that $c_\alpha=d$. We
let $\beta_1,\cdots,\beta_{d}$  be the unordered $d$-tuple of
roots:
\[\epsilon_1+\epsilon_{n},
\epsilon_{2}+\epsilon_{n-1},\cdots,\epsilon_{n-d}+ \epsilon_{d+1},
2\epsilon_{n-d+1},\cdots,2\epsilon_d.
\] Then it is easily checked that in both cases, the  $\beta_j$'s
satisfy (a)-(c) in Step 2 of \S \ref{compute}, and we have
$m_d=d$.

 Hence \[{\underset{1\le d\le n}{\sum}}m_d={\underset{1\le
d\le n}{\sum}}d={n+1\choose 2}=codim_{G/B}P/B.\]

\subsection{Orthogonal Grassmannian}\label{ortho}
Since $SO(2n+1)/P_n\cong SO(2n+2)/P_{n+1}$, and $SO(2n)/P_n\cong
SO(2n)/P_{n-1}$, we shall give the details for the orthogonal
Grassmannian $SO(2n)/P_n$. Thus $G=SO(2n),P=P_n$, and
$\tau=(n\,n-1\cdots 1)$.

In type \textbf{D}$_n$, we have that in the expression for a
positive root (as a non-negative integral linear combination of
simple roots), the simple root $\alpha_n$ occurs with a
coefficient $\le 1$. For all $P_d, 1\le d\le n$, we shall take
$\alpha$ (cf. \S \ref{simple}) to be $\alpha_n$.

\vs.2cm\ni\textbf{Case 1:} Let $1\le d\le n-2$. We have,
\[\omega_d+\tau(\omega_d) =(\epsilon_1+\cdots +\epsilon_d) +
(\epsilon_n+\cdots +\epsilon_{n+1-d}).
\] If $d< n+1-d$, then as in \S \ref{cn}, we have
 \[\omega_d+\tau(\omega_d)=(\epsilon_1+\epsilon_{n})+
(\epsilon_{2}+\epsilon_{n-1})+\cdots+(\epsilon_{d}+
\epsilon_{n+1-d}).\] We have that each root in the last sum belongs
to $R^+\,\setminus\,R^+_{P_{d}}$, and involves $\alpha_n$ with
coefficient equal to $1$. Hence $c_\alpha=d$. We let
$\beta_1,\cdots,\beta_{d}$  be the unordered $d$-tuple of roots:
\[\epsilon_1+\epsilon_{n},
\epsilon_{2}+\epsilon_{n-1},\cdots,\epsilon_{d}+
\epsilon_{n+1-d}.\] If $d\ge n+1-d$, then
\begin{eqnarray*}\omega_d+\tau(\omega_d) & = & (\epsilon_1+\cdots +\epsilon_{n-d}) +
(\epsilon_n+\cdots
+\epsilon_{d+1})+(2\epsilon_{n-d+1}+\cdots+2\epsilon_d)\\
& = & (\epsilon_1+\epsilon_{n+1-d})+
(\epsilon_{2}+\epsilon_{n+2-d})+\cdots+(\epsilon_{d}+
\epsilon_{n}). \end{eqnarray*} Again we have that  each root in
the last sum belongs to $R^+\,\setminus\,R^+_{P_{d}}$, and
involves $\alpha_n$ with coefficient equal to $1$. Hence we obtain
that $c_\alpha=d$. We let $\beta_1,\cdots,\beta_{d}$  be the
unordered $d$-tuple of roots:
\[(\epsilon_1+\epsilon_{n+1-d}),
(\epsilon_{2}+\epsilon_{n+2-d}),\cdots,(\epsilon_{d}+
\epsilon_{n}).
\] Then it is easily checked that in both cases, the  $\beta_j$'s
satisfy (a)-(c) in Step 2 of \S \ref{compute}, and we have
$m_d=d$.

\vs.2cm\ni\textbf{Case 2:} $d=n-1$. We have,
\[\omega_{n-1}+\tau(i(\omega_{n-1}))=
\begin{cases}(\epsilon_2+\cdots+\epsilon_{n-1}), &
\mathrm{\ if\ } n\mathrm{\ is\ even}\\
(\epsilon_1+\cdots+\epsilon_{n-1}), & \mathrm{\ if\ } n\mathrm{\
is\ odd}.
\end{cases}\] Hence expressing $\omega_{n-1}+\tau(i(\omega_{n-1}))$ as a
non-negative linear integral combination of positive roots, we
obtain
\[\omega_{n-1}+\tau(i(\omega_{n-1}))=
\begin{cases}
{\underset{2\le i\le
{\frac{n}{2}}}{\sum}}\epsilon_i+\epsilon_{n+1-i}, &
\mathrm{\ if\ } n\mathrm{\ is\ even}\\
{\underset{1\le i\le
{\frac{n-1}{2}}}{\sum}}\epsilon_i+\epsilon_{n-i}, & \mathrm{\ if\
} n\mathrm{\ is\ odd}.
\end{cases}\]
 Thus $\omega_{n-1}+\tau(i(\omega_{n-1}))$ is a sum of
 ${\frac{n-2}{2}}$ or ${\frac{n-1}{2}}$ roots in
 $R^+\,\setminus\,R^+_{P_{n-1}}$, according as $n$ is even or odd;
 further, each of them involves
 $\alpha_n$ with  coefficient one. Hence
 $$c_\alpha=\begin{cases}{\frac{n-2}{2}}&
\mathrm{\ if\ } n\mathrm{\ is\ even}\\
{\frac{n-1}{2}}& \mathrm{\ if\ } n\mathrm{\ is\ odd}.
\end{cases}$$

 We let
$\beta_1,\cdots,\beta_{r}, r=c_\alpha$  be the unordered $r$-tuple
of roots:\[\beta_i=\begin{cases}\epsilon_i+\epsilon_{n+1-i}, 2\le
i\le {\frac{n}{2}},& \mathrm{\ if\ } n\mathrm{\ is\ even}\\
\epsilon_i+\epsilon_{n-i}, 1\le i\le {\frac{n-1}{2}},& \mathrm{\
if\ } n\mathrm{\ is\ odd}.
\end{cases}\]

Clearly, the above  $\beta_j$'s  satisfy (a)-(c) in Step 2 of \S
\ref{compute}
$$m_{n-1}=\begin{cases}{\frac{n-2}{2}},&
\mathrm{\ if\ } n\mathrm{\ is\ even}\\
{\frac{n-1}{2}},& \mathrm{\ if\ } n\mathrm{\ is\ odd}.
\end{cases}$$

\vs.2cm\ni\textbf{Case 3:} $d=n$. Proceeding as in Case 2, we
have,
\[\omega_{n}+\tau(i(\omega_{n}))=
\begin{cases}(\epsilon_1+\cdots+\epsilon_{n}), &
\mathrm{\ if\ } n\mathrm{\ is\ even}\\
(\epsilon_2+\cdots+\epsilon_{n}), & \mathrm{\ if\ } n\mathrm{\ is\
odd}.
\end{cases}\] Hence we obtain
\[\omega_{n}+\tau(i(\omega_{n}))=
\begin{cases}
{\underset{1\le i\le
{\frac{n}{2}}}{\sum}}\epsilon_i+\epsilon_{n+1-i}, &
\mathrm{\ if\ } n\mathrm{\ is\ even}\\
{\underset{2\le i\le
{\frac{n+1}{2}}}{\sum}}\epsilon_i+\epsilon_{n+2-i}, & \mathrm{\
if\ } n\mathrm{\ is\ odd}.
\end{cases}\]
 Thus $\omega_{n-1}+\tau(i(\omega_{n-1}))$ is a sum of
 ${\frac{n}{2}}$ or ${\frac{n-1}{2}}$ roots in
 $R^+\,\setminus\,R^+_{P_{n}}$, according as $n$ is even or odd;
 further, each of them involves
 $\alpha_n$ with  coefficient one. Hence
 $$c_\alpha=\begin{cases}{\frac{n}{2}},&
\mathrm{\ if\ } n\mathrm{\ is\ even}\\
{\frac{n-1}{2}},& \mathrm{\ if\ } n\mathrm{\ is\ odd}.
\end{cases}$$

We let
$\beta_1,\cdots,\beta_{r}, r=c_\alpha$  be the unordered $r$-tuple
of roots:\[\beta_i=\begin{cases}\epsilon_i+\epsilon_{n+1-i}, 1\le
i\le {\frac{n}{2}},& \mathrm{\ if\ } n\mathrm{\ is\ even}\\
\epsilon_i+\epsilon_{n+2-i}, 2\le i\le {\frac{n+1}{2}},& \mathrm{\
if\ } n\mathrm{\ is\ odd}.
\end{cases}\]

Clearly, the above choice of $\beta_j$'s satisfies
(a)-(c) in Step 2 of \S \ref{compute}
$$m_{n}=\begin{cases}{\frac{n}{2}},&
\mathrm{\ if\ } n\mathrm{\ is\ even}\\
{\frac{n-1}{2}},& \mathrm{\ if\ } n\mathrm{\ is\ odd}.
\end{cases}$$

Combining cases 2 and 3, we obtain $m_{n-1}+m_n=n-1$, in both even
and odd cases. Combining this with the value of $m_d, 1\le d\le
n-2$ as obtained in case 1, we obtain \[{\underset{1\le d\le
n}{\sum}}m_d={\underset{1\le d\le n-2}{\sum}}d+n-1={n\choose
2}=codim_{G/B}P/B.\]

Thus combining the results of \S \ref{dn}, \S \ref{e6}, \S
\ref{e7}, \S \ref{last}, we obtain
\begin{thm}
The LMP conjecture and Wahl's conjecture hold for a minuscule $G/P$.
\end{thm}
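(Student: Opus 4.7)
The plan is purely to assemble the pieces already established. By the Main Reduction of \S\ref{steps}, the LMP conjecture for a given $G/P$ will hold as soon as we verify the single numerical equality
\[
 ord_{P/B}\,\sigma \;=\; \dim G/P \;=\; \mathrm{codim}_{G/B}\,P/B,
\]
and by the computation carried out in \S\ref{follow} this further reduces to
\[
 \sum_{1\le d\le l} m_d \;=\; \mathrm{codim}_{G/B}\,P/B,
\]
where $m_d:=ord_{\tau}\,p_{w_0^{(d)}}$. So the first step is to restrict attention to this one numerical identity.

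Next I would run through the list of minuscule fundamental weights tabulated at the end of \S\ref{steps}. Types $\mathbf{E}_8,\mathbf{F}_4,\mathbf{G}_2$ contain no minuscule weights and require nothing. For $\mathbf{A}_n$ (Grassmannians), $\mathbf{C}_n$ with $P=P_1$, and $\mathbf{B}_n/P_n\cong \mathbf{D}_{n+1}/P_{n+1}$ together with $\mathbf{D}_n/P_{n-1}\cong \mathbf{D}_n/P_n$, the detailed case-by-case computations in \S\ref{last} produce the values of $m_d$ and verify $\sum m_d=\mathrm{codim}_{G/B}P/B$ (in particular, the Lagrangian and Orthogonal Grassmannian subsections of \S\ref{last} furnish the required values, and the remark following the Main Reduction shows that $Sp(2n)/P_1\cong\mathbb{P}^{2n-1}$ may be disposed of via projective space). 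For $\mathbf{D}_n/P_1$, $\mathbf{E}_6/P_1$ and $\mathbf{E}_7/P_7$, the identity is exactly the content of Theorems \ref{main1}, \ref{main2} and \ref{main3}, respectively. Finally, $\mathbf{E}_6/P_6\cong \mathbf{E}_6/P_1$ is handled by the isomorphism noted in the remark preceding \S\ref{follow}. Thus the equality $\sum_d m_d=\mathrm{codim}_{G/B}\,P/B$ holds in every minuscule case, proving the LMP conjecture.

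Having the LMP conjecture in hand, Wahl's conjecture for each minuscule $G/P$ follows immediately from Theorem \ref{t.lmp} in all odd characteristics; the characteristic zero case then follows from the remark at the beginning of \S\ref{one}, since the Gaussian is defined over $\mathbb{Z}$ and the LMP conjecture (and hence the surjectivity of the Gaussian) has been verified in infinitely many positive characteristics. The prohibition on characteristic $2$ for the orthogonal cases is built into the set-up of \S\ref{dn} and \S\ref{ortho}.

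There is no genuine obstacle left at this point: all of the hard work has already been carried out in the preceding sections, and the proof of the final theorem is essentially bookkeeping. If one had to point to the conceptual hard step, it would be the verification of condition (c) in Step 2 of \S\ref{compute} in each case, i.e.\ exhibiting explicit mutually commuting reflections $s_{\beta_j}$ whose associated Chevalley elements $X_{-\beta_j}$ move $p_{\tau w_0^{(d)}}$ through a chain of extremal weight vectors all the way down to a non-zero multiple of $p_{e^{(d)}}$; but this has already been accomplished case by case in \S\S\ref{dn}--\ref{last}.
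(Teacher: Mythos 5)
Your proof is correct and follows the same plan as the paper: the final theorem is pure bookkeeping, assembling the Main Reduction of \S\ref{steps}, the reduction in \S\ref{follow} to $\sum_d m_d=\mathrm{codim}_{G/B}P/B$, the case-by-case verifications in \S\S\ref{dn}--\ref{last}, and Theorem \ref{t.lmp} for odd characteristic plus the reduction to infinitely many primes for characteristic zero. One small inaccuracy worth flagging: the Lagrangian Grassmannian $Sp(2n)/P_n$ treated in \S\ref{last} is \emph{not} minuscule (the minuscule weight for $\mathbf{C}_n$ is $\omega_1$, not $\omega_n$); that subsection is included only to reprove the result of \cite{lrs}, and is not needed here --- the $\mathbf{C}_n$ case is handled entirely by the $Sp(2n)/P_1\cong\mathbb{P}^{2n-1}$ remark, which you do invoke, so the proof is still complete.
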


\begin{rem}\label{path2}
 Thus in these cases again, we obtain a nice realization for $ord_{\tau}\,p_{w_0^{(d)}}$
 (the order of vanishing along $P/B$ of $p_{w_0^{(d)}}$) as being the length of
the shortest path through extremal weights in the weight lattice
connecting the highest weight (namely, $i(\omega_d)$ in
$H^0(G/B,L(\omega_d))$ and the extremal weight $-\tau(\omega_d)$.
\end{rem}

\end{document}